\def\NZQ{\Bbb}               
\def\NN{{\NZQ N}}
\def\ZZ{{\NZQ Z}}
\def\frk{\frak}               
\def\Phi{{\frk n}}
\def\Phi{{\frk N}}
\def\opn#1#2{\def#1{\operatorname{#2}}} 
\opn\chara{char} \opn\length{\ell} \opn\pd{pd} \opn\rk{rk}
\opn\projdim{proj\,dim} \opn\injdim{inj\,dim} \opn\rank{rank}
\opn\depth{depth} \opn\sdepth{sdepth} \opn\fdepth{fdepth}
\opn\grade{grade} \opn\height{height} \opn\embdim{emb\,dim}
\opn\codim{codim}  \opn\min{min} \opn\max{max}
\opn\Tr{Tr} \opn\bigrank{big\,rank}
\opn\superheight{superheight}\opn\lcm{lcm}
\opn\trdeg{tr\,deg}
\opn\reg{reg} \opn\lreg{lreg} \opn\ini{in} \opn\lpd{lpd}
\opn\size{size}
\opn\div{div} \opn\Div{Div} \opn\cl{cl} \opn\Cl{Cl}
\opn\Spec{Spec} \opn\Supp{Supp} \opn\supp{supp} \opn\Sing{Sing}
\opn\Ass{Ass} \opn\Min{Min}
\opn\Ann{Ann} \opn\Rad{Rad} \opn\Soc{Soc}
\opn\Im{Im} \opn\Ker{Ker} \opn\Coker{Coker} \opn\Am{Am}
\opn\Hom{Hom} \opn\Tor{Tor} \opn\Ext{Ext} \opn\End{End}
\opn\Aut{Aut} \opn\id{id}  \opn\deg{deg}
\opn\nat{nat}
\opn\pff{pf}
\opn\Pf{Pf} \opn\GL{GL} \opn\SL{SL} \opn\mod{mod} \opn\ord{ord}
\opn\Gin{Gin} \opn\Hilb{Hilb}
\opn\aff{aff} \opn\con{conv} \opn\relint{relint} \opn\st{st}
\opn\lk{lk} \opn\cn{cn} \opn\core{core} \opn\vol{vol}
\opn\link{link} \opn\star{star}
\opn\gr{gr}
\def\pot#1#2{#1[\kern-0.28ex[#2]\kern-0.28ex]}
\opn\dirlim{\underrightarrow{\lim}}
\opn\inivlim{\underleftarrow{\lim}}
\def\Implies{\ifmmode\Longrightarrow \else
        \unskip${}\Longrightarrow{}$\ignorespaces\fi}
\def\implies{\ifmmode\Rightarrow \else
        \unskip${}\Rightarrow{}$\ignorespaces\fi}
\def\iff{\ifmmode\Longleftrightarrow \else
        \unskip${}\Longleftrightarrow{}$\ignorespaces\fi}
\newtheorem{Theorem}{Theorem}[section]
\newtheorem{Lemma}[Theorem]{Lemma}
\newtheorem{Corollary}[Theorem]{Corollary}
\newtheorem{Proposition}[Theorem]{Proposition}
\newtheorem{Remark}[Theorem]{Remark}
\newtheorem{Example}[Theorem]{Example}
\let\epsilon\varepsilon
\let\phi=\varphi
\let\kappa=\varkappa
\def\qed{\ifhmode\textqed\fi
      \ifmmode\ifinner\quad\qedsymbol\else\dispqed\fi\fi}
\def\textqed{\unskip\nobreak\penalty50
       \hskip2em\hbox{}\nobreak\hfil\qedsymbol
       \parfillskip=0pt \finalhyphendemerits=0}
\def\dispqed{\rlap{\qquad\qedsymbol}}
\opn\dis{dis}
\def\pnt{{\raise0.5mm\hbox{\large\bf.}}}
\opn\Lex{Lex}
\begin{document}

\title{\bf Stanley depth of multigraded modules}

\author{ Dorin Popescu}

\thanks{The author was supported by CNCSIS Grant ID-PCE no. 51/2007. }

\address{Dorin Popescu, Institute of Mathematics "Simion Stoilow",
University of Bucharest, P.O.Box 1-764, Bucharest 014700, Romania}
\email{dorin.popescu@imar.ro} \maketitle

\begin{abstract}
 The Stanley's Conjecture on Cohen-Macaulay  multigraded modules is studied
 especially in dimension 2. In codimension 2 similar  results were obtained by  Herzog,
 Soleyman-Jahan and  Yassemi. As a consequence of our results
 Stanley's
 Conjecture holds in 5 variables.

  \vskip 0.4 true cm
 \noindent
  {\it Key words } : Monomial Ideals, Prime Filtrations, Pretty Clean Filtrations, Stanley
Ideals.\\
 {\it 2000 Mathematics Subject Classification}: Primary 13H10, Secondary
13P10, 13C14, 13F20.\\
\end{abstract}

\section*{Introduction}

Let $K$ be a field, $S=K[x_1,\ldots,x_n]$ be the polynomial ring in
$n$ variables, and $I\subset S$ a monomial ideal. One famous
conjecture of Stanley \cite{St} asked if there exist a presentation
of $S/I$ as a direct sum of $K$ linear spaces of the form $uK[Z]$
with $u$ monomial and $Z\subset \{x_1,\ldots,x_n\}$ a subset with
$|Z|\geq \depth S/I$. If this happens for a certain $I$ we say that
$I$ is a {\em Stanley ideal}. This conjecture is proved for $n\leq
4$ (see \cite{A1}, \cite{So}, \cite{AP1}). Here we show that this is
also true for $n=5$ (see Theorem \ref{5}). Also we notice that for
$n=5$ the so called pretty clean ideals \cite{HP} are exactly the
sequentially Cohen-Macaulay ones (see Theorem \ref{seq}). A similar
result for $n=4$ is given in \cite{AP}.

The proof of Theorem \ref{5} forces us to study  Stanley's
Conjecture on multigraded $S$-modules inspired by \cite{HVZ}. We
prove that some Cohen-Macaulay multigraded $S$-modules of dimension
$2$ are clean (see Theorem \ref{main}) in Dress terminology
\cite{Dr}, and so  Stanley's Conjecture holds for them. The proof
uses a partial polarization (see Proposition \ref{depth}) and is
hard and long   starting  with Lemma \ref{prime}.  This lemma is a
particular case of Lemma \ref{clean4} and their proofs are similar.
We believe that Lemma \ref{prime} deserves to be separately
considered, because it applies directly to a nice result (Theorem
\ref{clean3}). The value of Lemma \ref{clean4} appears later in the
next section, namely in Theorem \ref{pclean}. Several examples show
why the methods should be special for each case. A particular case
of Theorem \ref{main} says that monomial Cohen-Macaulay ideals of
dimension $2$ are Stanley ideals (see Corollary \ref{clean6}). This
reminds us \cite[Proposition 1.4]{HSY}, which says that monomial
Cohen-Macaulay ideals of codimension $2$ are Stanley ideals.  Some
Cohen-Macaulay multigraded  modules $M$ having only an associated
prime ideal are clean independently of dimension, as shows Theorem
\ref{maincor}.

We owe thanks to J. Herzog for some useful comments on our Lemma
\ref{prime} and the idea to state most of our results for the
filtration depth instead Stanley depth as they appeared in an
earlier version of our paper.

\section{Clean Multigraded Modules}

Let $K$ be a field, $S=K[x_1,\ldots,x_n]$, $n\geq 2$ be the
polynomial ring in $n$ variables, $m=(x_1,\ldots, x_n)$ and $M$ be a
finitely generated $\ZZ^n$-graded (i.e. multigraded) $S$-module. Let
$${\mathcal F}:\ \ 0=M_0\subset M_1\subset\ldots\subset M_r=M$$
be a chain of multigraded submodules of $M$. Then ${\mathcal F}$ is
a {\em prime filtration} if $M_i/M_{i-1}\cong S/P_i(-a_i)$, where
$a_i\in {\NN^n}$ and $P_i$ is a monomial prime ideal, $i\in [r]$.
The set $\Supp({\mathcal F})=\{P_1,\ldots, P_r\}$ is called the {\em
support} of ${\mathcal F}$. The filtration ${\mathcal F}$ is called
{\em clean} if $\Supp({\mathcal F})=\Min(M)$. If $\Ann_S (M)$ is
reduced then $M$ is called {\em reduced}. When $M$ is a reduced
cyclic $S$-module then $M$ is clean if and only if the simplicial
complex associated to $M$ is shellable (non-pure) as Dress shows
\cite{Dr}.

Usually, we will restrict to study multigraded $S$-modules $M$ with
$\dim_KM_a\leq 1$ for all $a\in {\ZZ}^n$. If $M$ is such a module
and $U\subset M$ is a multigraded submodule then an element $x=
\sum_{a\in {\ZZ}^n} x_a$ of $M$ belongs to $U$ if and only if all
${\ZZ}^n$-homogeneous components $x_a$ of $x$ belongs to $U$. Let
$I\subset J$ be monomial ideals of $S$. Then $M=J/I$ satisfies
$\dim_KM_a\leq 1$ for all $a\in {\ZZ}^n$.

  The following elementary lemma is known in a more
general frame (see \cite[Corollary 2.2]{Po_1}). We will prove it
here for the sake of our completeness.

\begin{Lemma} \label{clean} Let  $M$ be a  multigraded $S$-module
with $\Ass M=\{P_1,\ldots,P_r\}$,

\noindent $\dim S/P_i=1$ for $i\in [r]$.  Let $0=\cap_{i=1}^{r}N_i$
be an irredundant primary decomposition of $(0)$ in $M$ and suppose
that  $P_i=\Ann(M/N_i)$ for all $i$. Then $M$ is clean.
\end{Lemma}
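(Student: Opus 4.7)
The plan is to proceed by induction on the number $r$ of associated primes of $M$. A general structural observation used throughout: since each $P_i$ is a monomial prime with $\dim S/P_i=1$, one has $P_i=(x_j:j\ne j_i)$ for some index $j_i$, so $S/P_i\iso K[x_{j_i}]$ is a one-variable polynomial ring. Since all associated primes have the same Krull dimension, no one of them can strictly contain another, so $\Ass M=\Min M$. Hence cleanness will follow as soon as I produce some multigraded prime filtration with quotients of the form $S/P_i(-a)$, $i\in[r]$.

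For the base case $r=1$, the hypothesis $P_1=\Ann(M/N_1)=\Ann M$ makes $M$ a multigraded module over $S/P_1\iso K[x_{j_1}]$, and the hypothesis $\Ass M=\{P_1\}$ makes it torsion-free over this one-variable polynomial ring. Decomposing $M$ according to the multidegrees in the coordinates $\ne j_1$, one sees that any such module is multigraded-free, so $M\iso\bigoplus_k S/P_1(-b_k)$, which yields an obvious clean filtration.

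For the inductive step, I apply the induction hypothesis to the submodule $N_1\subset M$, endowed with the candidate primary decomposition $0=\bigcap_{i=2}^{r}(N_1\cap N_i)$. Two things must be checked. Irredundancy is easy: if $N_1\cap\bigcap_{i\geq 2,\,i\ne j}N_i=0$ for some $j\geq 2$, then $\bigcap_{i\ne j}N_i=0$, contradicting the irredundancy of the original decomposition. The substantial check is that $P_i=\Ann\bigl(N_1/(N_1\cap N_i)\bigr)$ for $i\geq 2$. Here I use the multigraded injection $N_1/(N_1\cap N_i)\hookrightarrow M/N_i$ together with the fact (base-case argument) that $M/N_i$ is $S/P_i$-free; this forces the submodule $N_1/(N_1\cap N_i)$ to be $S/P_i$-torsion-free, so its annihilator is contained in $P_i$, while the reverse inclusion $P_i\subset\Ann$ is automatic. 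Nonvanishing of $N_1/(N_1\cap N_i)$ follows once more from irredundancy, since $N_1\subset N_i$ would give $\bigcap_{j\ne i}N_j\subset N_i$. Thus the induction hypothesis yields a clean filtration of $N_1$ with support $\{P_2,\ldots,P_r\}$.

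To finish, extend this filtration upward through $M/N_1$: by the base-case argument $M/N_1$ is $S/P_1$-free, hence has a filtration with quotients of the form $S/P_1(-b_k)$. Concatenating gives a multigraded prime filtration of $M$ with support $\{P_1,\ldots,P_r\}=\Min M$, proving $M$ clean. The only nontrivial ingredient is the multigraded-freeness of a torsion-free multigraded $S/P_i$-module when $\dim S/P_i=1$; once this is in hand, the rest is bookkeeping with primary decomposition.
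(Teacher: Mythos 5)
Your proof is correct and follows essentially the same route as the paper: induction on $r$, with the base case resting on the freeness of a finitely generated torsion-free (multi)graded module over the one-dimensional ring $S/P_1$, and the inductive step splitting off one primary component $N_i$ via the filtration $0\subset N_i\subset M$ and the induced irredundant decomposition of $0$ in $N_i$. You merely verify a bit more carefully than the paper that the hypotheses (irredundancy and $P_i=\Ann$ of the factors) pass to the submodule, which is a welcome but not essentially different addition.
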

\begin{proof}
 Apply induction on $r$. If $r=1$ then $M$ is torsion-free over
$S/P_1$ and  we get $M$ free over $S/P_1$ since $\dim(S/P_1)= 1$.
Thus $M$ is clean over $S$.

Suppose that $r>1$. Then $0=\cap_{i=1}^{r-1}(N_i\cap N_r)$ is an
irredundant primary decomposition of $(0)$ in $N_r$ and by induction
hypothesis we get $N_r$ clean. Also $M/N_r$ is clean because
$\Ass(M/N_r)=\{P_r\}$ (case $r=1$). Hence the filtration $0\subset
N_r\subset M$ can be refined to a clean filtration of $M$.
\end{proof}

Next we will extend the above lemma for some reduced Cohen-Macaulay
multigraded modules of dimension 2 (from now on we suppose that
$n>2$). But first we need some preparations.

\begin{Lemma}\label{prime} Let $I\subset U$ be two monomial ideals
of $S$ such that $\Ass\; S/I$ contains only prime ideals of
dimension $2$, and $\Ass\; S/U$ contains only prime ideals of
dimension $1$. Suppose that $I$ is reduced. Then there exists $p\in
\Ass\; S/I$ such that $U/(p\cap U)$ is a Cohen-Macaulay module of
dimension $2$.
\end{Lemma}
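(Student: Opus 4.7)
The goal is to find an edge $e = \{i,j\}$ of the Stanley-Reisner complex $\Delta_I$ (a graph, by the dimension hypothesis on $I$) such that $U/(P_e \cap U) \cong (U+P_e)/P_e$ is a nonzero principal monomial ideal of $S/P_e \cong K[x_i,x_j]$. By the Auslander-Buchsbaum formula over the $2$-dimensional regular ring $K[x_i,x_j]$, a nonzero ideal there is free of rank one precisely when it is principal, and this is exactly what Cohen-Macaulayness of dimension $2$ requires for such an ideal. Since $I$ is reduced with all minimal primes of dimension $2$, we have $\Ass(S/I) = \{P_e = (x_k : k \notin e) : e \in E(\Delta_I)\}$ and $\Delta_I$ is a pure $1$-dimensional complex with no isolated vertices.

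Write the irredundant irreducible monomial decomposition $U = \bigcap_{k \in K} Q_k$ with $Q_k = (x_j^{a_{j,k}} : j \neq k)$ and $\sqrt{Q_k} = (x_j : j \neq k)$ for some subset $K \subseteq [n]$. A first observation is that every $k \in K$ is a vertex of $\Delta_I$: the inclusion $I \subset U$ forces $(x_j : j \neq k) \supseteq P_e$ for some edge $e$ of $\Delta_I$, hence $k \in e$. Given an edge $e = \{i,j\}$, a direct check of membership $x_i^a x_j^b \in Q_k$ shows that the set $S_e = \{(a,b) \in \NN^2 : x_i^a x_j^b \in U\}$ is the intersection, over $k \in K$, of: a half-plane $b \geq a_{j,k}$ if $k = i$, a half-plane $a \geq a_{i,k}$ if $k = j$, and an ``L-shape'' $\{a \geq a_{i,k}\} \cup \{b \geq a_{j,k}\}$ if $k \notin \{i,j\}$. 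Principality of $(U+P_e)/P_e$ is equivalent to $S_e$ having a unique minimal element.

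The case $|K| = 1$ is now immediate: with $K = \{k\}$, pick any edge $e = \{i,j\}$ of $\Delta_I$ containing $k$; then the only surviving constraint is a single half-plane coming from the other endpoint, so $S_e$ has a unique minimal element and the image is the principal ideal generated by the corresponding power of $x_i$ or $x_j$. For $|K| \geq 2$, I would proceed by induction on $|K|$: choose an extremal $k^* \in K$ (for instance a vertex peripheral in $\Delta_I$ with respect to $K$, or one with smallest relevant exponents $a_{j,k^*}$), apply the inductive hypothesis to $I \subset U' := \bigcap_{k \neq k^*} Q_k$ to obtain an edge $e$ for which $(U'+P_e)/P_e$ is principal, and then argue that $k^*$ and $e$ can be chosen so that the extra L-shape contributed by $Q_{k^*}$ does not destroy principality. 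The main obstacle is precisely this last step: an arbitrary L-shape can split a NE-corner into two minimal corners, so the reduction must use both the combinatorics of $\Delta_I$ and the divisibility constraints imposed by $I \subset U$ (which link the exponents $a_{j,k}$ across different $k$'s) in a delicate way. This matches the author's warning that the proof is hard and long and that the methods have to be tailored to each configuration.
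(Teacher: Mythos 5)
Your reduction of the lemma to a combinatorial statement is correct: since $I$ is reduced with all associated primes of dimension $2$, its minimal primes are the $P_e=(x_k:k\notin e)$ for the edges $e=\{i,j\}$ of a graph, $U/(P_e\cap U)\cong (U+P_e)/P_e$ is a nonzero monomial ideal of $K[x_i,x_j]$, and such an ideal is Cohen--Macaulay of dimension $2$ exactly when it is principal, i.e.\ when your set $S_e$ has a unique minimal element; this is equivalent to the paper's formulation $\depth S/(p+U)=1$. Your base case (a single component of $U$) is also fine. But the proposal stops exactly where the lemma begins: you do not exhibit the edge $e$, and you explicitly concede that the inductive step --- showing that the extra ``L-shape'' contributed by the last component does not destroy principality --- is an unresolved obstacle. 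That step is the entire content of the lemma, so what you have is a correct restatement plus a plan, not a proof. Moreover, the plan itself is doubtful: the correct choice of $p$ cannot in general be made by peeling off a ``peripheral'' component and perturbing, because it depends on a \emph{global} comparison of exponents across all components of $U$. The paper's proof makes precisely such a global extremal choice: writing $U=\cap_j Q_j$, it picks a minimal prime $p_1$ of $I$ and a component $Q_1$ with $\sqrt{Q_1}=(p_1,x_n)$ whose $x_n$-exponent $s_1$ is \emph{maximal} among all components whose radical has the form $(p_i,x_n)$ with $p_i\in\Min S/I$, and then shows, using $U=U+I=\cap_{j,q}(q+Q_j)$ and $\depth S/U=1$, that every $m$-primary ideal $p_1+Q_r$ already contains an intersection of the $1$-dimensional ideals $p_1+Q_1$, $p_1+Q_{c_1}$ (or their intersection $(p_1,x_1^dx_n^b)$), so that $p_1+U$ is an intersection of $1$-dimensional primary ideals. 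An induction that deletes components one at a time has no mechanism for recovering this maximality condition when the deleted component is reinstated.

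A smaller technical point: indexing the components of $U$ by a subset $K\subseteq[n]$ of vertices tacitly assumes one irreducible (or primary) component per associated prime of $U$; an irredundant irreducible decomposition may have several components with the same radical, so your description of $S_e$ as one constraint per vertex $k\in K$ needs to be adjusted (this is repairable, unlike the missing main step). Your observation that every such $k$ is a vertex of the graph, and the divisibility link between the exponents of $U$ and the condition $I\subset U$, are exactly the ingredients the paper exploits --- but the exploitation (the maximality choice and the case analysis over the $m$-primary sums $p_1+Q_r$) is what you would still have to supply.
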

\begin{proof}Let $U=\cap_{j=1}^t Q_j$ be a reduced primary
decomposition of $U$ and $P_j=\sqrt{Q_j}$. Since $P_1\supset I$
there exists a minimal prime ideal $p_1$ containing $I$ such that
$P_1\supset p_1$. Thus $P_1=(p_1,x_k)$ for some $k\in [n].$ We may
suppose that $p_1=(x_2,\ldots, x_{n-1})$ and $k=n$   after
renumbering the variables. Using the description of monomial primary
ideals we note that $Q_1+p_1=(p_1,x_n^{s_1})$ for some positive
integer $s_1$. Let
$${\mathcal I}=\{i\in [t]:P_i=(p_i,x_n)\ \ \text{for \ \ some}\ \
p_i\in \Min\; S/I\}.$$
 Clearly, $1\in {\mathcal I}$. If $i\in
{\mathcal I}$, that is $P_i=(p_i,x_n)$, then as above
$Q_i+p_i=(p_i,x_n^{s_i})$ for some positive integers $s_i$. We may
suppose that $s_1=\max_{i\in {\mathcal I}} s_i$. Then we claim that
$\depth \; S/(p_1+U)=1$.

Let $r\in [t]$ be such that $p_1+Q_r$ is $m$-primary. Since
$$U=U+I=\cap_{j\in [t], q\in \Ass\; S/I}(q+Q_j)$$ and $\depth\;
S/U=1$ we see that  $p_1+Q_r$ contains an intersection $\cap_{j=1}^e
(q_j+Q_{c_j})$ with $q_j\in \Ass\ S/I$, $c_j\in [t]$ and $\dim
(q_j+Q_{c_j})=1$, that is $q_j\subset P_{c_j}$. Note that $p_1+Q_r=$
$$p_1+(x_1^{a_1},x_n^{a_n},\ \ \text{some}\ x^{\alpha}\ \text{with}\ \supp\;
\alpha=\{1,n\}).$$

Suppose that $x_n\in P_{c_j}$ for all $j\in [e]$. Then we show that
$p_1+Q_1\subset p_1+Q_r$. Indeed, by hypothesis a power of $x_n$
belongs to the minimal system of generators of $q_j+Q_{c_j}$, let us
say $x_n^{b_{c_j}}\in G(q_j+Q_{c_j}).$ Set $b=\max_j b_{c_j}$. Then
$x_n^b\in \cap_{j=1}^e (q_j+Q_{c_j})\subset p_1+Q_r$ and so $b\geq
a_n$. Let $b=b_{c_j}$ for some $j$. If $x_n\in q_j$ then $b=1$ and
so $a_n=1$ and clearly $p_1+Q_1\subset P_1\subset p_1+Q_r$. If
$x_n\not \in q_j$ then $c_j\in {\mathcal I}$ and so $b=s_{c_j}\leq
s_1$. Thus $p_1+Q_1=(p_1,x_n^{s_1})\subset (p_1,x_n^b)\subset
p_1+Q_r$.

Now suppose that there exists $j\in [e]$ such that $x_n\not \in
P_{c_j}$, let us say $j=1$. Then $P_{c_1}=(x_1,\ldots, x_{n-1})$ and
$x_n\in P_{c_j}$ for all $j>1$. As above, let $x_n^{b_{c_j}}\in
G(q_j+Q_{c_j})$ for $j>1$ and $b=\max_{j>1}^e b_{c_j}$. If $b\geq
a_n$ then as above $p_1+Q_1\subset p_1+Q_r$. Assume that $b<a_n$.
Let $x_1^d$ be the power of $x_1$ contained in $G(q_1+Q_{c_1})$. If
$d\geq a_n$ then we get $p_1+Q_{c_1}=(p_1,x_1^d)\subset p_1+Q_r$ and
$\dim (p_1+Q_{c_1})=1$. Suppose that $d<a_1$. Then note that
$x_1^dx_n^b\in G(\cap_{j=1}^e (q_j+Q_{c_j}))$ and so $x_1^dx_n^b\in
p_1+Q_r$.  Thus $$(p_1+Q_1)\cap
(p_1+Q_{c_1})=(p_1,x_1^dx_n^b)\subset p_1+Q_r.$$

 Hence $p_1+U$ is the intersection
of primary ideals of dimension $ 1$, that is

\noindent $\depth S/(p_1+U)=1$. From the exact sequence
$$0\rightarrow U/(p_1\cap U)\rightarrow S/p_1\rightarrow S/(p_1+U)\rightarrow 0$$
we get $\depth U/(p_1\cap U)=2$.
\end{proof}

\begin{Lemma} \label{clean2} Let  $I\subset U$ be two monomial ideals
such that $U/I$ is a Cohen-Macaulay $S$-module of dimension $2$ and
$\Ass S/I$ contains only prime ideals of dimension $2$. Suppose that
$I$ is reduced. Then there exists $p\in \Ass S/I$ such that
$U/(p\cap U)$ is a Cohen-Macaulay module of dimension $2$.
\end{Lemma}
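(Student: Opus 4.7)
The plan is to follow the strategy of Lemma~\ref{prime}, introducing an absorption argument to handle the dimension-$2$ primary components of $U$ that may now appear.

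First, I record a depth consequence: since $\Ass S/I$ consists only of $2$-dimensional primes, $\depth S/I\geq 1$. The depth lemma applied to $0\to U/I\to S/I\to S/U\to 0$ with $\depth U/I=2$ then yields $\depth S/U\geq 1$, so $U$ has no $\mm$-primary component. In the minimal primary decomposition $U=\bigcap_{j=1}^t Q_j$, each $P_j=\sqrt{Q_j}$ therefore has dimension $1$ or $2$. Set $V=\bigcap_{\dim P_j=2}Q_j$ and $W=\bigcap_{\dim P_j=1}Q_j$, so $U=V\cap W$.

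Assume first $W\neq S$. Then $I\subset W$ satisfies the hypotheses of Lemma~\ref{prime}, which produces $p\in\Ass S/I$ with $\depth S/(p+W)\geq 1$. The heart of the proof is to check that the same $p$ also gives $\depth S/(p+U)\geq 1$. Using the monomial identity $p+U=\bigcap_j(p+Q_j)$, the dimension-$1$ factors $p+Q_j$ are controlled as in Lemma~\ref{prime}. For a dimension-$2$ factor $p+Q_j$ with $P_j=p_j\in\Ass S/I$, either $p_j=p$ and $p+Q_j=p$ is $2$-dimensional, or $p_j\neq p$ and $p+p_j$ has dimension $1$, or in the critical subcase $p+p_j=\mm$, in which case $p+Q_j\supseteq\mm$ forces $p+Q_j=\mm$, and this is absorbed by any other proper factor of the intersection. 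If instead $W=S$, so $U=V$ is purely $2$-dimensional, one chooses $p\in\Ass S/I\setminus\Ass S/V$ with at least one $p'\in\Ass S/V$ satisfying $p+p'\neq\mm$; the existence of such $p$ follows from the Cohen--Macaulay hypothesis on $U/I$, since otherwise, via the identity $y\cdot Q^{(p')}\subset p'$ for each $y\notin p'$, one would show that $\Ann(U/I)$ contains enough variables to force $\dim U/I<2$.

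In either case $\depth S/(p+U)\geq 1$, and the exact sequence $0\to U/(p\cap U)\to S/p\to S/(p+U)\to 0$, together with $\depth S/p=2$, forces $\depth U/(p\cap U)=2$, so that $U/(p\cap U)$ is Cohen--Macaulay of dimension $2$. The main obstacle is the absorption step in the first case: one must additionally verify that the chosen $p$ is not itself a $2$-dimensional primary component of $U$ (otherwise $U\subset p$ and $U/(p\cap U)=0$), which requires refining the choice produced by Lemma~\ref{prime} to additionally satisfy $p\notin\Ass S/V$, along the lines of the existence argument used in the $W=S$ case.
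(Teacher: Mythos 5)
Your route (split $U=V\cap W$ into its dimension-$2$ and dimension-$1$ parts, apply Lemma \ref{prime} to $I\subset W$, then absorb the factors coming from $V$) differs from the paper's, but it has a genuine gap exactly at the absorption step. The inference ``$p+p_j=\mm$, in which case $p+Q_j\supseteq\mm$'' is false as written: from $p+p_j=\mm$ you only get that $p+Q_j$ is $\mm$-primary, and an $\mm$-primary factor is not automatically redundant in $\bigcap_j(p+Q_j)$ --- producing such redundancies is precisely the hard part of the proof of Lemma \ref{prime}. What would rescue this subcase is the fact that every dimension-$2$ primary component $Q_j$ of $U$ is actually a prime belonging to $\Ass S/I$: since $I$ is reduced, $\prod_{q\in\Ass S/I}q\subset I\subset Q_j$ forces $P_j=p_j\in\Ass S/I$, and since the product of the remaining minimal primes is not contained in $P_j$, one gets $p_j\subset Q_j$, hence $Q_j=p_j$ and then indeed $p+Q_j=\mm$. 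This is exactly where the hypothesis that $I$ is reduced must be used and it is the key observation of the paper's own proof; your write-up never establishes it (you assume $P_j\in\Ass S/I$ and silently treat $Q_j$ as if it were its radical), and without it the absorption claim fails.

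The second gap is the choice of $p$, which you acknowledge but do not resolve. The $p$ supplied by Lemma \ref{prime} applied to $W$ may lie in $\Ass S/V$, giving $U\subset p$ and $U/(p\cap U)=0$, and the promised refinement is only referred to the ``$W=S$'' existence argument, which is not a proof: its claimed contradiction is the wrong invariant, because whenever $I\neq U$ we have $\Ass(U/I)\subset\Ass S/I$ consisting of dimension-$2$ primes, so $\dim U/I=2$ in any case. In the bad configurations it is the depth that drops, not the dimension: for $n=5$, $U=(x_1,x_2,x_3)$ and $I=U\cap(x_1,x_4,x_5)$ one has $U/I\cong(x_2,x_3)K[x_2,x_3]$ of dimension $2$ and depth $1$; no argument is given deriving a depth contradiction from ``$p+p'=\mm$ for all admissible pairs'', and the undefined expression $y\cdot Q^{(p')}\subset p'$ does not supply one. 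For comparison, the paper avoids both problems by a different reduction: if $U$ has a dimension-$2$ component, it equals some $p\in\Ass S/I$ as above; setting $I'=\cap_{q\in\Ass S/I,\,q\neq p}\,q$ one has $(U+I')/I'\cong U/I$, so the pair $(I,U)$ may be replaced by $(I',U+I')$ with smaller $|\Ass S/I|$ (the conclusion transfers back because $I'\subset p''$ for the prime $p''$ eventually found, whence $(U+I'+p'')/p''\cong U/(p''\cap U)$); after finitely many steps all components of $U$ have dimension $1$ and a single application of Lemma \ref{prime} finishes, automatically yielding a prime not containing $U$. To complete your version you would have to prove the missing existence statement (a $p\in\Ass S/I$ avoiding $\Ass S/V$ with $\depth S/(p+W)\geq1$, respectively with $p+p'\neq\mm$ for some $p'\in\Ass S/V$ when $W=S$), and the Cohen--Macaulay hypothesis on $U/I$ must enter that argument in an essential way.
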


\begin{proof} Let $U=\cap_{j=1}^t Q_j$ be a reduced primary
decomposition of $U$ and $P_j=\sqrt{Q_j}$. From the exact sequence
$$0\rightarrow U/I\rightarrow S/I\rightarrow S/U\rightarrow 0$$
we get $$\depth S/U\geq  \min\{\depth U/I -1, \depth S/I\}\geq 1.$$
Thus $1\leq \dim S/P_j\leq 2$ for all $j$. Suppose that $\dim
S/P_j=2$. Then we have $\Pi_{q\in \Ass S/I}\ q\subset I\subset
U\subset Q_j\subset P_j$. Thus $P_j\supset p $ for some $p\in \Ass
S/I$ and we get $P_j=p$ because $\dim S/P_j=\dim S/p$.  It  follows
that $\Pi_{q\in \Ass S/I,q\not =p }q\not \subset P_j$ and so
$p\subset Q_j\subset P_j=p$. Hence $p=Q_j$. Set $U'=\cap_{i=1, i\not
=j}^t Q_i$, $I'=\cap_{q\in \Ass S/I, q\not =p} \ q$. Then
$$(U+I')/I'\cong U/(U\cap I')=U/(U'\cap p\cap I')=U/(U'\cap I)=U/I.$$
 Changing $I$ by $I'$ and $U$ by $U+I'$ we may
reduce to a smaller $|\Ass S/I|$. By recurrence we may reduce in
this way to the case when $\dim S/Q_j=1$ for all $j\in [t]$ since
$I\not =U$. Now is enough to apply the above lemma.
\end{proof}

The above lemma cannot be extended to show that $U/(p\cap U)$ is
Cohen-Macaulay for all $p\in \Ass S/I$, as shows the following:

\begin{Example} {\em Let $$I=(x_1x_2)\subset U=(x_1,x_3^2)\cap
(x_2,x_3)$$ be monomial ideals of $S=K[x_1,x_2,x_3]$. We have
$\depth S/U=1$ and $\depth S/I=2$. Thus $U/I$ is Cohen-Macaulay but
$U/(U\cap (x_2))$ is not since $U+(x_2)=(x_1,x_2,x_3^2)\cap
(x_2,x_3)$, that is $\depth S/(U+(x_2))=0$. However, $U/(U\cap
(x_1))$ is Cohen-Macaulay because $U+(x_1)=(x_1,x_3^2)$.}
\end{Example}

\begin{Remark} {\em A natural extension of Lemma \ref{prime} is to
consider the case when $I$ is not reduced, let $I=\cap_{i=1}^r q_i$
be a reduced primary decomposition, and to show that $U/U\cap q_j$
is a Cohen-Macaulay module of dimension 2 for a certain $j$.
Unfortunately, this is not true as shows the following:}
\end{Remark}

\begin{Example} {\em Let $U=(x_1,x_3^2)\cap (x_2^2,x_3)$ and
$I=(x_1^2x_2^2)$ be monomial ideals of $S=K[x_1,x_2,x_3]$. Since
$\depth S/U=1$, $\depth S/I=2$ we see that $U/I$ is a Cohen-Macaulay
module of dimension 2. But $U+(x_1^2)=(x_1,x_3^2)\cap
(x_1^2,x_2^2,x_3)$ and $U+(x_2^2)=(x_1,x_2^2,x_3^2)\cap (x_2^2,x_3)$
show that $\depth S/(U+(x_1^2))=\depth S/(U+(x_2^2))=0$ and so
$U/(U\cap(x_1^2))$ and $U/(U\cap (x_2^2))$ are not Cohen-Macaulay.
However, $U+(x_1)=(x_1,x_3^2)$ and so $U/(U\cap (x_1))$ is a
Cohen-Macaulay module of dimension 2.}
\end{Example}

The idea of the following theorem comes from the fact that connected
simplicial complexes of dimension 1 are shellable (see \cite{Po_2}
for details).

\begin{Theorem} \label{clean3} Let  $I\subset U$ be two monomial ideals
such that $U/I$ is a Cohen-Macaulay $S$-module of dimension $2$.
Suppose that $I$ is reduced. Then $U/I$ is clean.
\end{Theorem}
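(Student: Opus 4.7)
The plan is to induct on $s := |\Ass(U/I)|$, with the vacuous case $s = 0$ (i.e.\ $U = I$) as the base. Before entering the induction I would first arrange $\Ass(S/I) = \Ass(U/I)$ by a preliminary reduction. Split $I = I^* \cap I''$ with $I^* := \bigcap_{q \in \Ass(U/I)} q$ and $I'' := \bigcap_{q \in \Ass(S/I) \setminus \Ass(U/I)} q$. Because $I \subset U$, one has $(I^* \cap U) \cap I'' = I$, and the second isomorphism theorem gives $(I^* \cap U)/I \cong ((I^* \cap U) + I'')/I''$, which embeds in $S/I''$. As a submodule of both $U/I$ and $S/I''$, this module has associated primes in $\Ass(U/I) \cap \Ass(S/I'') = \emptyset$, so it vanishes, yielding $U/I \cong (U + I^*)/I^*$. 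Replacing $(I, U)$ by $(I^*, U + I^*)$ I may assume $\Ass(S/I) = \Ass(U/I)$; in particular all minimal primes of $I$ have dimension two, which is exactly the hypothesis of Lemma \ref{clean2}.

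Lemma \ref{clean2} then supplies $p \in \Ass(S/I)$ for which $U/(p \cap U)$ is Cohen-Macaulay of dimension two. Since $U/(p \cap U) \cong (U + p)/p$ is a monomial ideal of the two-variable polynomial ring $S/p$, and any non-principal monomial ideal of $K[y_1, y_2]$ has projective dimension one and hence depth one, the Cohen-Macaulay hypothesis forces $(U + p)/p$ to be principal, so $U/(p \cap U) \cong S/p$ up to a grading shift, which is trivially clean. Next consider the short exact sequence
$$
0 \to (p \cap U)/I \to U/I \to U/(p \cap U) \to 0.
$$
The depth lemma yields $\depth((p \cap U)/I) \geq 2$, so the kernel is either zero or Cohen-Macaulay of dimension two.

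The crucial observation is that $p \notin \Ass((p \cap U)/I)$: for $u \in (p \cap U) \setminus I$ one has $u \in p$, so $(I : u) = \bigcap_{p_i \not\ni u} p_i$ is an intersection of minimal primes of $I$ all distinct from $p$, and by the pairwise incomparability of distinct minimal primes this intersection cannot equal $p$. Therefore $|\Ass((p \cap U)/I)| \leq s - 1$, and the inductive hypothesis applied to the monomial inclusion $I \subset p \cap U$ produces a clean filtration of $(p \cap U)/I$. Splicing this with the cyclic quotient $S/p$ gives the desired clean filtration of $U/I$. The main obstacle is arranging the strict drop in $|\Ass|$: without the preliminary reduction the prime supplied by Lemma \ref{clean2} could lie outside $\Ass(U/I)$ and the induction would fail to terminate; the two-variable collapse at $S/p$ that forces principality of $(U+p)/p$ is the other delicate point that is special to dimension two.
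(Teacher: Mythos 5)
Your proof is correct and follows essentially the same route as the paper: an induction that uses Lemma \ref{clean2} to peel off one minimal prime $p$ at a time, with the single-prime case settled by the observation that the maximal Cohen--Macaulay ideal $(U+p)/p$ of the two-variable polynomial ring $S/p$ must be free, hence principal. Your preliminary reduction to $\Ass(S/I)=\Ass(U/I)$ is a worthwhile addition---the theorem's hypotheses do not by themselves force every minimal prime of $I$ to have dimension $2$, which Lemma \ref{clean2} requires, and the paper invokes that lemma without making this step explicit---while your bookkeeping via $p\notin\Ass((p\cap U)/I)$ simply replaces the paper's isomorphism $(p\cap U)/I\cong((p\cap U)+I')/I'$, $I'=\bigcap_{q\neq p}q$, to the same effect.
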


\begin{proof} We follow  the proof of Lemma
\ref{clean}. Let $I=\cap_{i=1}^r p_i$ be a reduced primary
decomposition of $I$ (so $p_i$ are prime ideals). Apply induction on
$r$. If $r=1$ then $U/I$ is a maximal Cohen-Macaulay (so free) over
$S/p_1$. Thus $U/I$ is clean. Suppose that $r>1$. Then there exists
$j\in [r]$ such that $U/(p_j\cap U)$ is  a Cohen-Macaulay module of
dimension 2 using Lemma \ref{clean2}. From the exact sequence
$$0\rightarrow (p_j\cap U)/I\rightarrow U/I\rightarrow U/(p_j\cap U)\rightarrow 0$$
we see that $(p_j\cap U)/I$ is a  Cohen-Macaulay module of dimension
2. Set $I'=\cap_{i=1,i\not =j}^r p_i$. We have $(p_j\cap U)/I\cong
((p_j\cap U)+I')/I'$ because $(p_j\cap U)\cap I'=U\cap I=I$.
Applying induction hypothesis we get  the modules $((p_j\cap
U)+I')/I'$ and  $(U+p_j)/p_j\cong U/(p_j\cap U)$ clean and so the
filtration $0\subset (p_j\cap U)/I\subset U/I$ can be refined to a
clean one.
\end{proof}

\begin{Remark} {\em The above theorem does not hold for dimension 3.
The triangulation of the real projective plane gives a non-shellable
simplicial complex of dimension 2, which is Cohen-Macaulay over a
field $K$ of characteristic $\not =2$. So its associated
Stanley-Reisner ring is Cohen-Macaulay but not clean (see
\cite{Po_2} for details). Thus Lemma \ref{prime} cannot be extended
for depth 3, because otherwise the proof of the above proposition is
valid also in this case. An idea of this counterexample is given in
the following:}
\end{Remark}

\begin{Example} {\em Let $K$ be a field of characteristic  $\not =2$,
$S=K[a,b,c,d,e,f]$, $J=(a,c,f)\cap (b,e,f)\cap (c,d,e)\cap (c,e,f)$
and $$I=J\cap (a,b,c)\cap (a,b,e)\cap (a,d,e)\cap (a,d,f)\cap
(b,c,d)\cap (b,d,f).$$ Set $M=J/I$. We have the following exact
sequence
$$0\rightarrow M\rightarrow S/I \rightarrow S/J \rightarrow 0.$$
$S/J$ is shellable because the associated simplicial complex has the
facets $\{abf\}$,

\noindent $\{abd\},\{adc\},\{bde\}$ written in the shelling order.
$S/I$ is Cohen-Macaulay since its associated simplicial complex is
the triangulation of the real projective plane (see \cite[page
236]{BH} for details). Thus $\depth(S/I)=\depth(S/J)=3$. Applying
Depth Lemma to the above exact sequence we get $\depth\ M\geq 3$
(actually equality since $\dim\ M=3$). Set $P=(a,b,c)$. Then $P/I$
is the $P$-primary component of $S/I$ and so $N=P\cap J/I$ is the
$P$-primary component of $M$. Thus $M/N\cong (J+P)/P$. But
$J+P=(a,b,c,d,e)\cap (a,b,c,f)$ and so $\depth(S/(J+P))=1$. From the
exact sequence
$$0\rightarrow M/N \rightarrow S/P \rightarrow S/(J+P) \rightarrow 0$$
we get $\depth(M/N)=2<\depth\ M$ (a maximal regular sequence on
$M/N$ is $\{d,e-f\}$).}
\end{Example}

Next we want to extend Lemma \ref{prime} in the case when $I$ is not
reduced. First we present some preliminaries. Let $I\subset S$ be a
monomial ideal and $G(I)$ its unique minimal system of monomial
generators. Let $k\in [n]$. If $x_k$ is not regular on $S/I$ then
let $d_k(I)$ be the highest degree of $x_k$, which appears in a
monomial of $ G(I)$. If $x_k$ is regular on $S/I$, that is $x_k$
does not appear in a monomial of $G(I)$ then set $d_k(I)=0$. Let
$d(I)=\max_{k\in [n]} d_k(I)$.

\begin{Example}\label{d}{\em Let $Q$ be a monomial $(x_1,\ldots,x_r)$-primary ideal,
$r\leq n$. Then $Q=
(x_1^{a_1},\ldots,x_r^{a_r},\{x^{\alpha}:\alpha\in \Gamma,
\supp(\alpha)\subset\{1,\ldots,r\},|\supp(\alpha)|>1\})$ by
\cite[Proposition 5.1.8]{Vi} for  a set $\Gamma\subset\NN^n$, some
positive integers $a_1,\ldots, a_r$ and $d_k(Q)=a_k$ for $k\in [r]$,
$d(Q)=\max_{k\in [r]}a_k$. If $U=\cap_{j=1}^t Q_j$ is a reduced
primary decomposition of the monomial ideal $U$ then
$d_k(U)=\max_{j\in [t]}d_k(Q_j)$ and $d(U)=\max_{j\in [t]}d(Q_j)$.}
\end{Example}

\begin{Lemma} \label{clean4} Let  $I\subset U$ be two monomial ideals
of $S$ such that  $\Ass S/I$ contains only prime ideals of dimension
$2$, and $\Ass S/U$ contains only prime ideals of dimension $1$. Let
$k\in [n]$ and  $U=\cap_{i=1}^t Q_i$,  be a reduced primary
decomposition of $U$,  $P_i=\sqrt{Q_i}$. Suppose that
$d(I)=d_k(I)\leq d_k(Q_i)$ for some $i$ and there exists $p\in \Ass
S/I$ such that $P_i=(x_k,p)$. Then  $U/(p\cap U)$ is a
Cohen-Macaulay module of dimension $2$.
\end{Lemma}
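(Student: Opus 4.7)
The strategy is to mirror the proof of Lemma \ref{prime}, with the hypothesis $d(I)=d_k(I)\leq d_k(Q_i)$ controlling the non-prime components of $I$. After renumbering variables we may assume $k=n$, $p=(x_2,\ldots,x_{n-1})$, $i=1$, and $P_1=(x_2,\ldots,x_n)$, so that $Q_1+p=(p,x_n^{s_1})$ with $s_1=d_n(Q_1)\geq d(I)$. Among all indices $i'$ satisfying the hypothesis we may further assume $i=1$ maximizes $d_n(Q_{i'})$; then $d_n(Q_j)\leq s_1$ for every $j$ with $x_n\in P_j$, reproducing the maximality used in Lemma \ref{prime}.

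The main reduction is the short exact sequence
$$0\to U/(p\cap U)\to S/p\to S/(p+U)\to 0.$$
Since $S/p\cong K[x_1,x_n]$ has depth $2$, by the depth lemma it suffices to prove $\depth S/(p+U)\geq 1$, i.e., that $p+U$ has no $m$-primary component in an irredundant primary decomposition. Writing $I=\bigcap_{l=1}^{s} q_l$ as a reduced primary decomposition and invoking distributivity of sum over intersection for monomial ideals gives $p+U=\bigcap_{j,l}(p+q_l+Q_j)$.

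Following Lemma \ref{prime}, fix any $r\in[t]$ with $p+Q_r$ being $m$-primary and show that $p+Q_r$ contains an intersection of other components of dimension $1$, making it redundant. In Case 1 (every $P_{c_j}$ entering the covering contains $x_n$), the minimal $x_n$-exponent $b_{c_j}$ of $q_{l_j}+Q_{c_j}$ is bounded by $d_n(q_{l_j})\leq d(I)\leq s_1$ when $x_n\in\sqrt{q_{l_j}}$, and by $d_n(Q_{c_j})\leq s_1$ otherwise by the maximality of $s_1$; this yields $a_n\leq s_1$ and hence $p+Q_1=(p,x_n^{s_1})\subseteq p+Q_r$. In Case 2 (some $P_{c_1}=(x_1,\ldots,x_{n-1})$ omits $x_n$), one compares $x_1$- and $x_n$-exponents in the mixed-support generators $x_1^d x_n^b$ of $q_{l_1}+Q_{c_1}$ to produce either $p+Q_{c_1}$ or the two-fold intersection $(p+Q_1)\cap(p+Q_{c_1})$ inside $p+Q_r$, using the same bound $d_n(q_l)\leq s_1$ for the $x_n$-exponents.

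The main obstacle is controlling $b_{c_j}$ when $q_{l_j}$ is primary but not prime: the reduced-case argument ``$x_n\in q_{l_j}$ forces $b_{c_j}=1$'' is no longer directly available and must be replaced by $d_n(q_{l_j})\leq d(I)\leq s_1$ coming from the hypothesis. With this bound in hand, the remaining inclusions of Lemma \ref{prime} transfer verbatim, so $p+U$ is realized as an intersection of dimension-$1$ primary ideals, which gives $\depth S/(p+U)=1$ and hence $\depth U/(p\cap U)=2$ as required.
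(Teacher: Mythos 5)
Your proof follows essentially the same route as the paper's: the same short exact sequence reducing the claim to $\depth S/(p+U)=1$, the same normalization maximizing $d_n(Q_1)$ over the components whose radical has the form $(p_j,x_n)$ with $p_j\in\Ass S/I$, and the same two-case analysis in which the hypothesis $d(I)=d_k(I)\leq d_k(Q_i)$ replaces the reducedness of $I$ to bound the pure $x_n$-powers of the dimension-one pieces $q_l+Q_{c_j}$. One small overstatement: the bound $d_n(Q_j)\leq s_1$ need not hold for \emph{every} $j$ with $x_n\in P_j$ (only for those $j$ with $P_j=(x_n,p')$ for some $p'\in\Ass S/I$ with $x_n\notin p'$), but that is precisely the situation in which you invoke it, so the argument is unaffected.
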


\begin{proof} Let  $I=\cap_{j=1}^e q_j$ be a reduced primary
decomposition of  $I$ and  $p_j=\sqrt{q_j}$.
 After
renumbering the variables $x$ and ideals $(Q_i)_{i}, (q_j)_j$ we may
suppose $k=n$, $i=1$, $p=p_1=(x_2,\ldots, x_{n-1})$, $P_1=(p_1,x_n)$
and $d_n(Q_1)\geq d_n(I)$.  Let
$${\mathcal I}=\{i\in [t]:\text{there\ \ is}\ j\in [e]\ \text{with} \ \ P_i=(p_j,x_n)\}.$$
 Clearly, $1\in {\mathcal I}$. Renumbering $(P_i)$, $(p_j)$ we may
 suppose that $d_n(Q_1)=\max_{i\in {\mathcal I}} d_n(Q_i)$. Note
 that we have still $d_n(Q_1)\geq d_n(I)$.
  Then we
 claim that $ \depth S/(p_1+U)=1$.

Let $r\in [t]$ be such that $p_1+Q_r$ is $m$-primary. Since
$$U=U+I=\cap_{i\in [t], j\in [e]}(q_j+Q_i)$$ and $\depth\;
S/U=1$ we see that  $q_1+Q_r$ contains an intersection $\cap_{j\in
{\mathcal J}} (q_j+Q_{c_j})$ for a subset ${\mathcal J}\subset [e]$
and $c_j\in [t]$ with $\dim (q_j+Q_{c_j})=1$, that is $p_j\subset
P_{c_j}$. Note that $$p_1+Q_r= p_1+(x_1^{a_1},x_n^{a_n},\ \
\text{some}\ x^{\alpha}\ \text{with}\ \supp\; {\alpha}=\{1,n\}).$$

Suppose that $x_n\in P_{c_j}$ for all $j\in {\mathcal J}$. Then we
show that $p_1+Q_1\subset p_1+Q_r$. Indeed, we set $b=\max_{j\in
{\mathcal J}} d_n(q_j+Q_{c_j})$ and note that  $x_n^b\in \cap_{j\in
{\mathcal J}} (q_j+Q_{c_j})\subset q_1+Q_r\subset p_1+Q_r$ and so
$b\geq a_n$. Let $b=d_n(q_{\rho}+Q_{c_{\rho}})$ for some $\rho\in
{\mathcal J}$. If $x_n\in p_{\rho}$ then we get $d_n(q_{\rho})\geq
d_n(q_{\rho}+Q_{c_{\rho}})=b\geq a_n$. By hypothesis, we have
$d_n(Q_1)\geq d_n(I)\geq d_n(q_{\rho})\geq b\geq a_n$ and so
$p_1+Q_1\subset p_1+Q_r$. If $x_n\not \in p_{\rho}$ then
$c_{\rho}\in {\mathcal I}$ and we have
$$d_n(Q_1)\geq d_n(Q_{c_{\rho}})\geq d_n(q_{\rho}+Q_{c_{\rho}})=b\geq
a_n$$ and so we get again  $p_1+Q_1\subset p_1+Q_r$.

Now suppose that there exists $\nu \in {\mathcal J}$ such that
$x_n\not \in P_{c_{\nu}}$. Then $P_{c_{\nu}}=(x_1,\ldots, x_{n-1})$
and $x_n\in P_{c_j}$ for all $j\in {\mathcal J}$ with $j\not =\nu$.
Set $b=\max_{j\in {\mathcal J},j\not =\nu }d_n(q_j+Q_{c_j})$. If
$b\geq a_n$ then as above $p_1+Q_1\subset p_1+Q_r$. Assume that
$b<a_n$. If $d=d_1( Q_{c_{\nu}})\geq a_1$  then we get
$p_1+Q_{c_{\nu}}\subset p_1+Q_r$ and $\dim (p_1+Q_{c_{\nu}})=1$.
Suppose that $d<a_1$. Then note that $x_1^dx_n^b\in \cap_{j\in
{\mathcal J}} (q_j+Q_{c_j}))\subset p_1+Q_r$.  Thus
$$(p_1+Q_1)\cap (p_1+Q_{c_{\nu}})=(p_1,x_1^dx_n^b)\subset p_1+Q_r.$$

 Hence $p_1+U$ is the intersection
of primary ideals of dimension $ 1$, that is

\noindent $\depth S/(p_1+U)=1$. From the exact sequence
$$0\rightarrow U/(p_1\cap U)\rightarrow S/p_1\rightarrow S/(p_1+U)\rightarrow 0$$
we get $\depth U/(p_1\cap U)=2$.
\end{proof}

\section{Partial polarization}

The aim of this section is to extend slightly a part of
\cite[Theorem 2.6]{HTT} (such tool is needed for a generalization of
Lemma \ref{clean4}, see Theorem \ref{pclean}). Let $I\subset S$ be a
monomial ideal. We have $d(I)=1$ if and only if $I$ is square free.
Suppose that $d(I)>1$. We consider the monomial ideal $I_1$
generated by $I$ and the monomials $u/x_i$ for those $i\in [n]$ and
$u\in G(I) $ with $\deg_{x_i}u=d(I)$. Clearly $d(I_1)=d(I)-1$ and
$I_1/I$ is reduced. Let $U\supset I$ be a monomial ideal. Similarly,
let $U_1$ be the ideal generated by $U$ and the monomials $a/x_i$
for those $i\in [n]$ and $a\in G(U) $ with $\deg_{x_i}a=d(I)$.

\begin{Proposition}\label{depth}
 Then $U_1\supset I_1$ and $$\depth(U_1/I_1)\geq \depth(U/ I)$$ if $U_1\not =I_1$. In
particular  $\depth_S S/I_1\geq \depth_S S/I.$
\end{Proposition}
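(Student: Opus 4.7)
The strategy is to realize the passage from $(U,I)$ to $(U_1,I_1)$ as a one-step partial polarization, i.e.\ as specialization of a single-parameter family. Introduce a new variable $y$ and set $T = S[y]$. Writing $J = (u/x_i : u\in G(I),\ \deg_{x_i}u = d(I))$ and $J_U = (a/x_i : a\in G(U),\ \deg_{x_i}a = d(I))$, so that $I_1 = I + J$ and $U_1 = U + J_U$, I define
\[
\tilde I \;=\; IT + yJT,\qquad \tilde U \;=\; UT + yJ_U T\ \subset\ T.
\]
A generator-wise verification yields both $U_1\supset I_1$ and $\tilde U\supset \tilde I$: for $u\in G(I)$ with $\deg_{x_i}u = d(I)$, write $u = a\cdot v$ with $a\in G(U)$ dividing $u$. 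If $\deg_{x_i}a = d(I)$, then $a/x_i\in J_U$ and $u/x_i = (a/x_i)\cdot v\in J_U$; otherwise $\deg_{x_i}a < d(I)$ forces $x_i\mid v$, so $u/x_i = a\cdot(v/x_i) \in U$. Either way $u/x_i\in U_1$ and $y\cdot(u/x_i)\in \tilde U$.

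Two specializations pin down the comparison. Since $yJT\subset(y)T$, one has $\tilde I + (y)T = IT + (y)T$, hence $T/(\tilde I,y)\cong S/I$. Since $y\equiv 1\pmod{y-1}$, one has $\tilde I + (y-1)T = I_1 T + (y-1)T$, hence $T/(\tilde I,y-1)\cong S/I_1$; the analogous identifications hold for $\tilde U$. So specializing at $y=0$ recovers $(U,I)$ while specializing at $y=1$ recovers $(U_1,I_1)$.

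The key technical observation is that $y-1$ is a non-zero-divisor on $T/\tilde I$, on $T/\tilde U$, and hence on the submodule $\tilde U/\tilde I\hookrightarrow T/\tilde I$. The bihomogeneous components of $\tilde I$ are $I$ in $y$-degree $0$ and $I_1$ in every $y$-degree $\geq 1$ (similarly for $\tilde U$ with $U$, $U_1$ in place of $I$, $I_1$). For $f = \sum_b f_b y^b$ with $f(y-1)\in\tilde I$, matching bihomogeneous parts of $f(y-1) = \sum_b(f_{b-1}-f_b)y^b$ yields $f_0\in I$ and $f_b-f_{b-1}\in I_1$ for $b\geq 1$; induction from $f_0\in I\subset I_1$ forces $f_b\in I_1$ for all $b\geq 1$, whence $f\in\tilde I$. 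This gives
\[
\depth_T \tilde U/\tilde I \;=\; 1 + \depth_S U_1/I_1.
\]

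The main obstacle, and the step at which the proposition extends \cite[Theorem~2.6]{HTT}, is the complementary lower bound $\depth_T \tilde U/\tilde I \geq 1 + \depth_S U/I$. I will obtain this by producing a regular sequence of length $1+\depth_S U/I$ on $\tilde U/\tilde I$. Starting from a regular sequence on $U/I$ of multidegree-$1$ forms that remain regular on $U_1/I_1$ (whose existence is the content of the standard polarization argument of HTT, applied to the single new variable $y$), one extends it by $y-1$, whose regularity was verified above by the bihomogeneous calculation. Combining this lower bound with the preceding equality gives $\depth_S U_1/I_1 \geq \depth_S U/I$, as claimed; the particular case $U = S = U_1$ yields $\depth_S S/I_1 \geq \depth_S S/I$.
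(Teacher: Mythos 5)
Your deformation is not the polarization this proposition needs, and the two central claims you make about it are false, not merely unproved. Because $\tilde I=IT+yJT$ \emph{keeps} the original top-degree generators $u$ of $I$ (instead of replacing them by $y\,u/x_i$), the bigraded pieces of $\tilde U/\tilde I$ are simply $U/I$ in $y$-degree $0$ and $U_1/I_1$ in every $y$-degree $\geq 1$; the family carries no comparison between the two depths. Moreover the inference ``$y-1$ is a non-zerodivisor on $\tilde U/\tilde I$, hence $\depth_T\tilde U/\tilde I=1+\depth_S U_1/I_1$'' is invalid: $y-1$ does not lie in the graded maximal ideal $(x_1,\dots,x_n,y)$ of $T$, so its regularity contributes nothing to depth. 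Concretely, take $S=K[x_1,x_2]$, $I=(x_1^2,x_1x_2)$, $U=S$. Then $d(I)=2$, $I_1=(x_1)$, $U_1=S\neq I_1$, and your $\tilde I=(x_1^2,x_1x_2,x_1y)=x_1(x_1,x_2,y)$, so $\depth_T T/\tilde I=0$, whereas your equality predicts $1+\depth_S S/I_1=2$ and your key lower bound predicts $\depth_T T/\tilde I\geq 1+\depth_S S/I=1$. Both fail. The only statement that survives from the regularity of $y-1$ is the inequality $\depth_S(U_1/I_1)\geq \depth_T(\tilde U/\tilde I)-1$ (via $\Tor$-vanishing and Auslander--Buchsbaum), and even that requires the unverified identification $\tilde U/(\tilde I+(y-1)\tilde U)\cong U_1/I_1$, i.e.\ the inclusion $\tilde U\cap(\tilde I+(y-1)T)\subset \tilde I+(y-1)\tilde U$, which you assert without proof.

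The deeper gap is that your justification of the lower bound $\depth_T\tilde U/\tilde I\geq 1+\depth_S U/I$ is circular: a regular sequence of length $\depth(U/I)$ on $U/I$ ``that remains regular on $U_1/I_1$'' is literally equivalent to the conclusion $\depth(U_1/I_1)\geq\depth(U/I)$, and its existence is not what the HTT polarization argument provides. The paper's proof of Lemma \ref{depth1} does something genuinely different at exactly this point: it forms $I',U'\subset T$ by \emph{replacing} each generator $u$ with $\deg_{x_i}u=d(I)$ by $y\,u/x_i$ (so no generator of $I'$ has $x_i$-degree $d(I)$); proves by a colon-ideal argument that the homogeneous, positive-degree element $y-x_i$ is regular on $T/I'$; proves the nontrivial inclusion $[U'\cap(y-x_i)T]+I'\subset I'+(y-x_i)U'$ to get $U'/(I'+(y-x_i)U')\cong U/I$, whence $\pd_T(U'/I')=\pd_S(U/I)$; and only then passes to $\tilde U/\tilde I$ by inverting $y$ ($\tilde IT_N=I'T_N$, $\tilde UT_N=U'T_N$), obtaining $\pd_S(\tilde U/\tilde I)\leq\pd_S(U/I)$ and concluding by Auslander--Buchsbaum. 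With your added-generator ideal the localization step would still give $I_1$, but the crucial identity $\pd_T(\tilde U/\tilde I)=\pd_S(U/I)$ is exactly what breaks (in the example above $\pd_T T/\tilde I=3>2=\pd_S S/I$); this is why the replacement, the regularity of $y-x_i$, and the special-fiber isomorphism cannot be bypassed.
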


The proof follows applying recursively the following lemma for
different $i \in [n]$. Fix $i \in [n]$ such that there exists $v\in
G(I)$ with $\deg_{x_i}v=d(I)$ and let $\tilde I$ be the ideal
generated in $S$ by $I$ and the monomials $v/x_i$ with $v\in G(I)$,
$\deg_{x_i}v=d(I)$. Let $U\supset I$ be a monomial ideal. Similarly,
let ${\tilde U}$ be the ideal generated by $U$ and the monomials
$a/x_i$  with $a\in G(U) $, $\deg_{x_i}a=d(I)$.

\begin{Lemma}\label{depth1}
 Then  ${\tilde U}\supset {\tilde I}$ and
$$\depth({\tilde U}/{\tilde I})\geq \depth(U/I)$$ if ${\tilde U}\not ={\tilde I}$.
In particular
$\depth_S S/{\tilde I}\geq \depth_S S/I.$
 \end{Lemma}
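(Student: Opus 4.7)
The inclusion $\tilde U \supset \tilde I$ is immediate: each minimal generator $v/x_i$ of $\tilde I$ has $v \in I \subset U$, so $v = aw$ for some $a \in G(U)$ and monomial $w$; if $x_i \mid w$ then $v/x_i = a(w/x_i) \in U \subset \tilde U$, and otherwise $\deg_{x_i} a = \deg_{x_i} v = d(I)$ forces $a/x_i \in \tilde U$ and hence $v/x_i = (a/x_i) w \in \tilde U$.

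For the depth inequality, I would apply a one-step polarization argument in the spirit of \cite[Theorem 2.6]{HTT}. Introduce a new variable $y$, set $S' = S[y]$ (multigraded by $\deg y = e_i$), and define monomial ideals $I^\circ \subset U^\circ \subset S'$ by replacing each minimal generator of $I$ (resp.\ $U$) whose $x_i$-degree is $d(I)$ by the monomial obtained from substituting one factor of $x_i$ by $y$, and leaving the other generators alone. Set $M := U^\circ/I^\circ$. Under the specializations $y \mapsto x_i$ and $y \mapsto 1$ one gets $M/(y-x_i)M \cong U/I$ and $M/(y-1)M \cong \tilde U/\tilde I$; the isomorphisms come from the colon identities $U^\circ \cap (I^\circ + (y-x_i)S') = I^\circ + (y-x_i)U^\circ$ and similarly for $y - 1$, which in turn reduce to the regularity of the relevant element on $S'/U^\circ$.

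The technical heart of the proof is the $M$-regularity of both $y - x_i$ and $y - 1$. For $y - x_i$: writing $f = \sum_{k \geq 0} y^k f_k$ with $f_k \in S$ and using the decomposition $I^\circ = (G_0) + y(G_1)$, where $G_0$ consists of the generators of $I$ of $x_i$-degree strictly below $d(I)$ and $G_1 = \{v/x_i : v \in G(I),\ \deg_{x_i} v = d(I)\}$, the condition $(y-x_i)f \in I^\circ$ decomposes by $y$-degree into $x_i f_0 \in (G_0)$ and $f_{k-1} - x_i f_k \in \tilde I$ for $k \geq 1$. A downward induction starting from $f_{N+1} = 0$ forces $f_k \in \tilde I$ for every $k$; together with the combinatorial identity $\tilde I \cap ((G_0):_S x_i) \subset (G_0)$ (proved case-by-case on which generator of $\tilde I$ divides an element of the intersection, using that $\deg_{x_i} w < d(I)$ for every $w \in G_0$), this yields $f_0 \in (G_0)$ and hence $f \in I^\circ$. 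The regularity of $y - 1$ then follows from an analogous, simpler $y$-degree decomposition argument using only that every minimal generator of $I^\circ$ has $y$-degree at most one.

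The conclusion follows via Auslander--Buchsbaum. Since $y - x_i$ is both $S'$-regular and $M$-regular, the standard formula $\pd_{S'}(M/(y-x_i)M) = \pd_{S'}(M) + 1$ combined with $M/(y-x_i)M \cong U/I$ and the change-of-rings identity $\pd_{S'}(N) = \pd_S(N) + 1$ (valid for every finitely generated $S$-module $N$ viewed as an $S'$-module through any specialization $y \mapsto c$) yields $\pd_{S'}(M) = \pd_S(U/I)$. The short exact sequence $0 \to M \xrightarrow{y-1} M \to \tilde U/\tilde I \to 0$ then gives $\pd_{S'}(\tilde U/\tilde I) \leq \pd_{S'}(M) + 1$, and a second application of the change-of-rings formula translates this to $\pd_S(\tilde U/\tilde I) \leq \pd_S(U/I)$. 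Auslander--Buchsbaum applied once more, now over $S$, finally delivers $\depth_S(\tilde U/\tilde I) \geq \depth_S(U/I)$, as desired. The main obstacle is verifying the $M$-regularity of $y - x_i$, and specifically the combinatorial identity $\tilde I \cap ((G_0):_S x_i) \subset (G_0)$ at its heart.
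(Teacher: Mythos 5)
Your overall strategy coincides with the paper's: one step of polarization with a new variable $y$, regularity statements for $y-x_i$, identification of two specializations of $M=U^\circ/I^\circ$ with $U/I$ and $\tilde U/\tilde I$, and Auslander--Buchsbaum. Your two variations are both legitimate. Proving that $y-x_i$ is regular on $S'/I^\circ$ via the $y$-degree decomposition together with the inclusion $\tilde I\cap((G_0):_S x_i)\subset (G_0)$ is a correct and more explicit substitute for the paper's argument, which instead assumes $y-x_i$ lies in an associated prime $(I^\circ:f)$ and derives a contradiction; your inclusion does hold, precisely because every $w\in G_0$ has $\deg_{x_i}w<d(I)$. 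Likewise, passing to $\tilde U/\tilde I$ by the specialization $y\mapsto 1$ (where $y-1$ is a nonzerodivisor modulo every monomial ideal of $S'$) is a clean replacement for the paper's localization at the powers of $y$, and your bookkeeping with projective dimensions and the change-of-rings formula is sound.

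The genuine gap is at the isomorphism $M/(y-x_i)M\cong U/I$. You correctly reduce the colon identity $U^\circ\cap(I^\circ+(y-x_i)S')=I^\circ+(y-x_i)U^\circ$ to the regularity of $y-x_i$ on $S'/U^\circ$, but you never prove that regularity, and it is false in general. Your argument for $I^\circ$ uses essentially that every unpolarized generator has $x_i$-degree strictly below $d(I)$; the unpolarized generators of $U$ are only required to have $x_i$-degree different from $d(I)$ and may have $x_i$-degree larger than $d(I)$, and then the analogous inclusion $\tilde U\cap((H_0):_S x_i)\subset (H_0)$ (with $H_0$ the unpolarized generators of $U$) breaks down. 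Concretely, take $I=(x_1^2x_2)\subset U=(x_1^2x_2,\,x_1^5x_3)$ in $K[x_1,x_2,x_3]$ with $i=1$, so $d(I)=2$ and $U^\circ=(yx_1x_2,\,x_1^5x_3)$: for $f=x_1^4x_2x_3$ one has $(y-x_1)f\in U^\circ$ but $f\notin U^\circ$, the colon identity fails, and $M/(y-x_1)M\cong S/(x_1x_2)$ only surjects, non-injectively, onto $U/I\cong S/(x_2)$. Hence the equality $\pd_{S'}M=\pd_S(U/I)$ on which your final paragraph rests is not established. To be fair, the paper's own verification of this step has the same weakness (it concludes ``$\deg_{x_i}w=d(I)$'' where only $\deg_{x_i}w\geq d(I)$ follows), so you have faithfully reproduced the delicate point rather than overlooked something the paper supplies; but as written your proof, like the paper's, needs an additional argument handling generators of $U$ whose $x_i$-degree exceeds $d(I)$.
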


\begin{proof} We follow the proof of \cite[Theorem 2.6]{HTT} and of
\cite[Lemma 4.2.16]{BH} slightly modified.  We introduce a new
variable $y$ and   set
 $u'=x_1^{a_1}\cdots
x_{i-1}^{a_{i-1}}x_i^{a_i-1}yx_{i+1}^{a_{i+1}}\cdots x_n^{a_n}$ for
$u=x_1^{a_1}\cdots x_n^{a_n}\in G(I)$ if $a_i=d(I)$ and $u'=u$
otherwise. Let $I'$ be the ideal generated in $T=S[y]$ by $\{u':u\in
G(I)\}$. We claim that $y-x_i$ is regular on $T/I'$. Suppose this is
not the case. Then $y-x_i$ belongs to an associated prime ideal $Q$
of $T/I'$. Since $Q$ is a monomial ideal we get $y,x_i\in Q$ and we
may choose a monomial $f$ such that $Q=(I':f)$. Thus $yf,x_if\in
I'$. If $yf=wu'$ for some $u'\in G(I')$ and a monomial $w$ we get
$y|u'$ because if $y|w$ then $f\in I'$, which is false. Then
$x_i^{d(I)-1}|u'$ and so $x_i^{d(I)-1}|f$.

Now $x_if=qv' $ for some $v'\in G(I')$ and $q $ a monomial. Clearly
$q$ is not a multiple of $x_i$ because otherwise $f\in I'$. Thus
$x_i^{d(I)}|v'$ which is not possible since $\deg_{x_i}v'< d(I)$ by
construction. Hence $y-x_i$ is regular on $T/I'$.

As above, let  $U'$ be the ideal generated in $T$ by $a'=ya/x_i$ for
$a\in G(U)$ with $\deg_{x_i} a=d(I)$ and by $\{a\in G(U):\deg_{x_i}
a\not=d(I)\}$. Note that $U'\supset I'$. Set $M'=U'/I'$ and $M=U/I$.
We have
$$M'/(y-x_i)M'\cong U'/(I'+(y-x_i)U')$$ and consider the surjective
map $\varphi: M'/(y-x_i)M'\rightarrow M$ given by $h\rightarrow
h(y=x_i)$. For the injectivity of $\varphi $ it is enough to show
the inclusion
$$[U'\cap(y-x_i)T]+I'\subset I'+(y-x_i)U'.$$
Let $f\in T$ be such that $(y-x_i)f\in U'$. We show that $f\in U'$.
As $U'$ is a monomial ideal it is enough to take $f$ monomial. Then
$yf,x_if\in U'$. If $yf\in (U'\cap S)T$ then $f\in (U'\cap
S)T\subset U'$. Otherwise, $yf=a'g$ for some $g\in T$ and $a\in
G(U)$ with $\deg_{x_i}(a)=d(I)$. If $y|g$ then $f=a'(g/y)\in U'$. If
$y\not |g$ then $f=(u/x_i)g\in S$. As $x_if\in U'$ it follows by
construction that  $x_if=wq$ for some $q\in S$ and  $w\in G(U)$ with
$\deg_{x_i}w\not =d(I)$. If $x_i|q$  then $f\in U'$. Otherwise,  we
get $\deg_{x_i}w=d(I)$, which is false. Hence $\varphi$ is an
isomorphism. The module $M'$ could be seen as the {\em first step of
polarization of} $M$ {\em with respect of $x_i$}.

Next, let $N$ be the multiplicative system generated by $y$ in $T$.
We have ${\tilde I}T_N=I'T_N$ and ${\tilde U}T_N=U'T_N$. Tensorizing
with $S\otimes_T-$ a minimal free resolution of $M'$ over $T$ we get
a minimal free resolution of $M$ over $S$ because $y-x_i$ is regular
on $T/I'$ and so on $M'$. Thus $\pd_T M'=\pd_S M.$ Since the
localization is exact it follows
$$\pd_{T_N} {\tilde U}T_N/{\tilde I}T_N=\pd_{T_N} M'_N\leq \pd_TM'=\pd_SM.$$ But
$$\pd_{T_N} {\tilde U}T_N/{\tilde I}T_N=\pd_{S} {\tilde U}/{\tilde
I},$$
  and so
$$\pd_S {\tilde U}/ {\tilde I}\leq \pd_SM = \pd_S U/I.$$ Applying
Auslander-Buchsbaum Theorem we are done.
\end{proof}

\begin{Remark}{\em The polarization can be made even with a variable
$x_i$ for which $d_i(I)<d(I)$ and the above lemma still holds.}
\end{Remark}

The following examples show some limits of our Lemma \ref{depth1}.

\begin{Example} {\em Let $I=(x^2,z^2)$, $U=(xy)+I$ be ideals in
$S=K[x,y,z]$. Fix the variable $x$.
 We have ${\tilde I}=(x,z^2)={\tilde U}$, but
$U/I\cong S/((x^2,z^2):xy)=S/(x,z^2)$ has depth 1.}
\end{Example}

\begin{Example}\label{mainex} {\em Let $I=(x^2z,t^2)$, $U=(x^2,xy,t^2)$ be ideals in
$S=K[x,y,z,t]$. Fix the variable $x$. We have ${\tilde I}=(xz,t^2)$,
${\tilde U}=(x,t^2)$. Since $y$ is regular on $S/I$, it is also
regular on $U/I$ and we have $U/(I+yU)=U/((x,t^2)\cap
(x^2,y^2,t^2)\cap (z,y,t^2))$. Thus $z+x$ is regular on $U/(I+yU)$
and so $U/I$ has depth 2. Clearly, ${\tilde U}/{\tilde I}\cong
S/(z,t^2)$ has depth 2 as supports also Lemma \ref{depth1}. In the
following exact sequence
$$0\rightarrow  (U+{\tilde I})/{\tilde I}\rightarrow
S/(xz,t^2)\rightarrow S/(x^2,xy,xz,t^2)\rightarrow 0,$$ we have
$\depth(S/(xz,t^2))=2$ and $\depth(S/(x^2,xy,xz,t^2))=0$,
$(x^2,xy,xz,t^2)=(x,t^2)\cap (x^2,y,z,t^2)$ being a reduced primary
decomposition. It follows
$$1=\depth(U/(U\cap {\tilde I}))< \depth(U/I)=\depth ({\tilde U}/{\tilde
I})=2.$$}
\end{Example}

Next we give some sufficient conditions when the following
inequality holds: $\depth(U/(U\cap {\tilde I}))\geq \depth(U/I)$.

\begin{Lemma}\label{hope} Let $U\supset I$ be some monomial ideals such that
$U/I$ is a Cohen-Macaulay $S$-module of dimension $s$. Suppose that
$I$ is a primary ideal. If $U\not \subset {\tilde I}$ then $U/(U\cap
{\tilde I})$ is a Cohen-Macaulay $S$-module of dimension $s$.
\end{Lemma}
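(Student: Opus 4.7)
The plan is to reduce this lemma to Lemma \ref{depth1} by proving that, under the primary hypothesis on $I$, one has $\tilde U = U + \tilde I$, so that $U/(U\cap\tilde I) \cong (U+\tilde I)/\tilde I = \tilde U/\tilde I$.

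First I would use the structure of monomial primary ideals from Example \ref{d}: after relabeling the variables, $P = \sqrt{I} = (x_1,\dots,x_r)$, and $G(I)$ contains the pure powers $x_j^{a_j}$ with $d_j(I)=a_j$ for each $j\in [r]$; every mixed generator $x^\alpha \in G(I)$ satisfies $\alpha_j < a_j$. Since $i$ was fixed so that $d_i(I) = d(I)$, we must have $i \in [r]$ and $x_i^{d(I)} = x_i^{a_i} \in G(I) \subset U$.

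The central step is the identity $\tilde U = U + \tilde I$. The inclusion $\supset$ is immediate since both $U$ and $\tilde I$ sit inside $\tilde U$. For the reverse, it suffices to check that each new generator $a/x_i$ of $\tilde U$, with $a \in G(U)$ and $\deg_{x_i} a = d(I)$, lies in $\tilde I$. Since $x_i^{d(I)} \in U$, some $u \in G(U)$ divides $x_i^{d(I)}$, hence $u = x_i^k$ with $k \le d(I)$; then $u$ also divides $a$, and minimality of $a$ in $G(U)$ forces $u = a$, giving $a = x_i^{d(I)}$. Consequently $a/x_i = x_i^{d(I)-1}$, which is precisely the generator of $\tilde I$ coming from $x_i^{d(I)} \in G(I)$. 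This is the heart of the argument and where I expect the main obstacle: without $I$ primary there need not be a pure power $x_i^{d(I)} \in G(I)$, and one may have $a \in G(U) \setminus I$ with $\deg_{x_i} a = d(I)$ (compare Example \ref{mainex}), so that the identity breaks down.

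Granted this equality, $U/(U\cap\tilde I) \cong \tilde U/\tilde I$, and Lemma \ref{depth1}, whose hypothesis $\tilde U \ne \tilde I$ is equivalent to the assumption $U \not\subset \tilde I$, yields $\depth(\tilde U/\tilde I) \ge \depth(U/I) = s$. For the dimension, note that $\tilde I$ remains $P$-primary: all its generators are monomials in $x_1,\dots,x_r$, and the pure powers $x_j^{a_j}$ still belong to $\tilde I$, so by Example \ref{d} the ideal $\tilde I$ is $P$-primary. Hence $\Ass(S/\tilde I) = \{P\}$, and the nonzero submodule $\tilde U/\tilde I \subset S/\tilde I$ has $\Ass = \{P\}$ and dimension $\dim(S/P) = s$. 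Combined with $\depth \le \dim$, this forces $\depth(\tilde U/\tilde I) = s$, giving the claimed Cohen--Macaulay property of dimension $s$.
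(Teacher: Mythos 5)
Your proof is correct and follows essentially the same route as the paper: both establish $\tilde U = U+\tilde I$ from the structure of the primary ideal $I$ (the only possible generator of $U$ with $x_i$-degree $d(I)$ is the pure power $x_i^{d(I)}$, whose quotient by $x_i$ lies in $\tilde I$) and then invoke Lemma \ref{depth1} through the isomorphism $U/(U\cap\tilde I)\cong \tilde U/\tilde I$. Your extra verification that $\tilde I$ stays $P$-primary, so that $\dim U/(U\cap\tilde I)\le s$ and Cohen--Macaulayness of dimension $s$ follows, merely makes explicit a point the paper leaves implicit.
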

\begin{proof} As in the Example \ref{d}, we may suppose that

\noindent
$G(I)=\{x_1^{d_1},\ldots,x_r^{d_r},x^{u_1},\ldots,x^{u_p}\}$ for
some positive integers $d_j$, $r\in [n]$, and some $u_e\in {\NN}^n$
with support in $\{x_1,\ldots,x_r\}$ by renumbering the variables
$x$. Then $d_i=d(I)$ for some $i\in [r]$. Note that $G(U)$ contains
a power $x_i^t$ of $x_i$, $1\leq t\leq d(I)$ and there exist no
other monomial $a\in G(U)$ with $\deg_{x_i}a\geq t$. Hence ${\tilde
U}=U+{\tilde I}$. By Lemma \ref{depth1} we have
$$\depth(U/(U\cap {\tilde I}))=\depth({\tilde U}/{\tilde I})\geq
\depth(U/I).$$
\end{proof}
\begin{Corollary}\label{depth2} Let $U\supset I$ be some monomial ideals such that
$U/I$ is a Cohen-Macaulay $S$-module of dimension $s$ and $I$ is a
primary ideal.
 If $U\not \subset I_1$ then  $U/(U\cap I_1)$
is a Cohen-Macaulay $S$-module of dimension $s$.
\end{Corollary}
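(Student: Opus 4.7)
The plan is to iterate Lemma~\ref{hope}, writing $I_1$ as a sequence of single-variable $\widetilde{(\cdot)}$-operations applied to $I$. The structural input is that, since $I$ is primary with generators
$$G(I)=\{x_1^{d_1},\ldots,x_r^{d_r},x^{u_1},\ldots,x^{u_p}\}$$
as in Example~\ref{d}, each mixed generator $x^{u_e}$ must satisfy $\deg_{x_j}x^{u_e}<d_j$ for every $j\in[r]$; otherwise it would be a proper multiple of some $x_j^{d_j}\in G(I)$. Hence the only $u\in G(I)$ with $\deg_{x_i}u=d(I)$ is the pure power $u=x_i^{d(I)}$, and consequently
$$I_1=I+(x_{i_1}^{d(I)-1},\ldots,x_{i_m}^{d(I)-1}),$$
where $i_1,\ldots,i_m$ are the variables with $d_{i_k}(I)=d(I)$.

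Set $I^{(0)}=I$ and $I^{(k)}=I+(x_{i_1}^{d(I)-1},\ldots,x_{i_k}^{d(I)-1})$, so that $I^{(m)}=I_1$. Each $I^{(k)}$ is a primary ideal with the same radical as $I$; and applying the structural observation to $I^{(k-1)}$ shows that $I^{(k)}$ is precisely the ideal obtained from $I^{(k-1)}$ by the $\widetilde{(\cdot)}$-operation of Lemma~\ref{depth1} with respect to $x_{i_k}$.

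The main claim, proved by induction on $k$, is that $U/(U\cap I^{(k)})$ is a Cohen-Macaulay $S$-module of dimension $s$; the base case $k=0$ is the hypothesis. For the inductive step, set $U':=U+I^{(k-1)}$. Then $U'\supset I^{(k-1)}$, and $U'/I^{(k-1)}\cong U/(U\cap I^{(k-1)})$ is Cohen-Macaulay of dimension $s$ by induction. Since $I^{(k)}\subset I_1$ and $U\not\subset I_1$, we have $U'\not\subset I^{(k)}$, so Lemma~\ref{hope} applied to the pair $(U',I^{(k-1)})$ with variable $x_{i_k}$ yields that $U'/(U'\cap I^{(k)})$ is Cohen-Macaulay of dimension $s$. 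The modular law gives $U'\cap I^{(k)}=(U\cap I^{(k)})+I^{(k-1)}$ (using $I^{(k-1)}\subset I^{(k)}$), so $U'/(U'\cap I^{(k)})\cong U/(U\cap I^{(k)})$, completing the induction. After $m$ steps we reach $I^{(m)}=I_1$.

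The main obstacle is the structural observation that the only top-degree generators of a primary monomial ideal are the pure powers $x_i^{d(I)}$; this is what ensures that iterating the one-variable operations of Lemma~\ref{hope} reaches exactly $I_1$ after $m$ steps, rather than overshooting as the examples preceding Lemma~\ref{hope} show is possible in general. Once this observation is in place, the device $U':=U+I^{(k-1)}$ handles the possible failure of the containment $U\supset I^{(k-1)}$ at intermediate steps, and the iteration goes through smoothly.
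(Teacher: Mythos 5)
Your proof is correct and is essentially the paper's own argument: the paper's entire proof of Corollary \ref{depth2} is the one-line instruction to apply Lemma \ref{hope} recursively, and your induction via the intermediate ideals $I^{(k)}$, the observation that for a primary ideal the only generators of top degree are the pure powers (so the recursion lands exactly on $I_1$), and the device $U'=U+I^{(k-1)}$ with the modular-law isomorphism simply spell out that recursion carefully. No substantive difference in approach.
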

For the proof apply the above lemma recursively.

Next we want to complete Lemma \ref{clean4}.

\begin{Theorem} \label{pclean} Let  $I\subset U$ be two monomial ideals
of $S$ such that  $\Ass S/I$ contains only prime ideals of dimension
$2$, and $\Ass S/U$ contains only prime ideals of dimension $1$.
Then there exists $p\in \Ass S/I$ such that $U/(p\cap U)$ is a
Cohen-Macaulay module of dimension $2$.
\end{Theorem}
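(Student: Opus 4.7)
The plan is to prove the theorem by induction on $d(I)$, reducing the general case to Lemma \ref{clean4} via iterated partial polarization from Section 2. The base case $d(I) = 1$ is immediate: $I$ is then reduced, and Lemma \ref{prime} supplies the required $p$.

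For the inductive step, write $I = \bigcap_{j=1}^e q_j$ and $U = \bigcap_{i=1}^t Q_i$ as irredundant primary decompositions, with $p_j = \sqrt{q_j}$ of dimension $2$ and $P_i = \sqrt{Q_i}$ of dimension $1$. Choose $k \in [n]$ with $d_k(I) = d(I)$. If some index $i$ satisfies $P_i = (x_k, p_j)$ with $p_j \in \Ass S/I$ and $d_k(Q_i) \geq d(I)$, then Lemma \ref{clean4} applies directly with $p = p_j$. Otherwise $d_k(Q_i) \leq d(I) - 1$ for every $i$ (trivially so when $x_k \notin P_i$), and we form the partial polarization $\tilde I$ of $I$ with respect to $x_k$ as in Lemma \ref{depth1}, so that $d_k(\tilde I) = d(I) - 1$.

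Three sub-claims drive the argument. First, $\tilde I \subset U$: each new generator $v/x_k$ (with $v \in G(I)$ and $\deg_{x_k} v = d(I)$) lies in every $Q_i$, either because $d_k(Q_i) \leq d(I) - 1 = \deg_{x_k}(v/x_k)$ forces $x_k^{d_k(Q_i)}$ to divide $v/x_k$ (when $x_k \in P_i$), or because any $x_k$-free generator of $Q_i$ dividing $v$ also divides $v/x_k$ (when $x_k \notin P_i$). Second, $\tilde I = \bigcap_j \tilde q_j$, where $\tilde q_j$ denotes the analogous partial polarization of $q_j$ at threshold $d(I)$; the forward inclusion is direct by writing each $v \in G(I)$ as a multiple $v = c \cdot u$ of a generator $u \in G(q_j)$ and distinguishing $\deg_{x_k} u = d(I)$ (so $c$ has no $x_k$ and $v/x_k = c \cdot u/x_k \in \tilde q_j$) versus $\deg_{x_k} u < d(I)$ (so $x_k \mid c$ and $v/x_k \in q_j$); for the reverse inclusion, given a monomial $m \in \bigcap_j \tilde q_j$ one first observes that $m x_k \in I$ (multiplication by $x_k$ converts each added generator $u/x_k$ back into $u \in q_j$), then picking $v \in G(I)$ with $v \mid m x_k$ one has either $\deg_{x_k} v \leq \deg_{x_k} m$ (whence $v \mid m$ and $m \in I \subset \tilde I$) or $\deg_{x_k} v = \deg_{x_k} m + 1 = d(I)$ (whence $v/x_k \in G(\tilde I)$ divides $m$). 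Third, each $\tilde q_j$ remains $p_j$-primary, since its added generators $u/x_k$ have support inside the variables of $p_j$ by the structure of monomial primary ideals recalled in Example \ref{d}.

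From these sub-claims we obtain $\Ass S/\tilde I \subset \{p_j\}_j = \Ass S/I$, so the hypotheses of the theorem transfer from $I$ to $\tilde I$; iterating the polarization for any remaining variables still at top degree eventually lowers $d$ strictly, and the inductive hypothesis yields $p \in \Ass S/\tilde I \subset \Ass S/I$ with $U/(p \cap U)$ Cohen--Macaulay of dimension $2$. The principal obstacle is the identity $\tilde I = \bigcap_j \tilde q_j$, which is the combinatorial engine carrying associated-prime information through the polarization; once it is in place, the rest of the argument is routine.
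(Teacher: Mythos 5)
Your overall strategy --- keep $U$ fixed, replace $I$ by its partial polarization $\tilde I$, check that the hypotheses transfer to $(\tilde I,U)$, and induct on $d(I)$ --- breaks down at the case split. The negation of the hypothesis of Lemma \ref{clean4} is \emph{not} ``$d_k(Q_i)\le d(I)-1$ for every $i$'': it can happen that $x_k\in P_i$ and $d_k(Q_i)\ge d(I)$ while the unique dimension-two prime $p$ with $P_i=(x_k,p)$ does not belong to $\Ass S/I$ (the minimal prime of $I$ contained in $P_i$ may itself contain $x_k$). In that situation your first sub-claim $\tilde I\subset U$ --- on which the whole reduction rests, since you leave $U$ unchanged --- is simply false. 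Concretely, take $n=4$, $I=(x_1^2,x_2)$ and $U=(x_1^2,x_2,x_3)$ in $S=K[x_1,\ldots,x_4]$: here $\Ass S/I=\{(x_1,x_2)\}$ consists of a dimension-two prime, $\Ass S/U=\{(x_1,x_2,x_3)\}$ of a dimension-one prime, $d(I)=d_1(I)=2\le d_1(Q_1)$, yet $P_1=(x_1,x_2,x_3)$ is not of the form $(x_1,p)$ with $p\in\Ass S/I$, so Lemma \ref{clean4} is not applicable; nevertheless $\tilde I=(x_1,x_2)\not\subset U$, and also $d_1(Q_1)=2>d(I)-1$, contradicting your claimed dichotomy. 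So in the genuine ``otherwise'' case the pair $(\tilde I,U)$ need not satisfy the inclusion your induction requires.

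This is precisely why the paper polarizes $U$ as well: using exactly the information available in the ``otherwise'' case (for all $k,j,s$ with $d(I)=d_k(I)\le d_k(Q_j)$ and $P_j\supset p_s$ one has $x_k\in p_s$), it shows $\tilde E_{js}=Q_j+\tilde q_s$ whenever $E_{js}=Q_j+q_s$ is not $m$-primary, deduces $\tilde U=U+\tilde I$ with $\tilde U\neq\tilde I$, applies Lemma \ref{depth1} and the induction hypothesis to the pair $(\tilde I,\tilde U)$, and only then transfers the conclusion back to $U$ via $U/(p\cap U)\cong\tilde U/(p\cap\tilde U)$. Your sub-claims 2 and 3 ($\tilde I=\bigcap_j\tilde q_j$ with each $\tilde q_j$ still $p_j$-primary, hence $\Ass S/\tilde I\subset\Ass S/I$) are essentially sound --- though in the reverse inclusion you should justify the step $\deg_{x_k}m+1=d(I)$, which follows because $m\in\tilde q_j\setminus q_j$ for some $j$ forces $\deg_{x_k}m\ge d(I)-1$ --- but they cannot rescue the argument once $\tilde I\not\subset U$ fails; some modification of $U$ along the lines of the paper's $\tilde U$ is unavoidable.
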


\begin{proof} Apply induction on $d(I)$. If $d(I)=1$ then apply Lemma \ref{prime}.
Suppose that $d(I)>1$.
 Let  $U=\cap_{j=1}^t Q_j$, $I=\cap_{s=1}^e q_s$ be some reduced primary decompositions of $U$, respectively $I$ ,
$P_j=\sqrt{Q_j}$ and $p_s=\sqrt{q_s}$. If there exists $k\in [n]$
such that $d(I)=d_k(I)\leq d_k(Q_j)$ for some $j$ and  $P_j=(x_k,p)$
for some $p\in \Ass S/I$, then we may apply Lemma \ref{clean4}.
Otherwise, for all $k\in [n]$, $j\in [t]$,  $s\in [e]$  with
$d(I)=d_k(I)\leq d_k(Q_j)$ and  $P_j\supset p_s $ we have $x_k\in
p_s$. Take $i$ such that $d(I)=d_i(I)$ and consider the construction
${\tilde I}$ with respect of this $i$. Set $E_{js}=Q_j+q_s$.
Clearly, ${\tilde E}_{js}\supset Q_j+{\tilde q_s}$. We claim that
${\tilde E}_{js}=Q_j+{\tilde q_s}$ if it is not $m$-primary. Indeed,
if $d(I)\leq d_i(Q_j)$ and $x_i^{d(I)}\in G(q_s)$ then
$x_i^{d(I)-1}\in {\tilde q}_s$. If $x_i^{d(I)}\not \in G(q_s)$ we
get $d(I)>d_i(q_s)$ and so $d_i(E_{js})<d(I)$ if $x_i\in p_s$. If
$x_i\not\in p_s$ then either $d(I)>d_i(Q_j)$ by our hypothesis if
$P_j\supset p_s$, or $E_{js}$ is $m$-primary. Hence, if
$x_i^{d(I)-1}\in {\tilde E}_{js}\setminus E_{js}$  and $E_{js}$ is
 not $m$-primary then we must have $x_i^{d(I)-1}\in {\tilde q}_s$, which
shows our claim.

As $U=U+I$ and $\depth S/U=1$ it follows $\tilde
U=\cap_j\cap_{s}{\tilde E}_{js}=\cap_j\cap_{s}(Q_j+{\tilde
q}_s)=U+{\tilde I}$ and ${\tilde U}\not ={\tilde I}$. By Lemma
\ref{depth1} we get that $U+{\tilde I}/{\tilde I}\cong{\tilde
U}/{\tilde I}$ is a Cohen-Macaulay module of dimension $2$. If
$I_1={\tilde I}$ then $d(I)>d({\tilde I})$ and we find by induction
hypothesis $p\in \Ass S/{\tilde I}=\Ass S/I$ such that $U/(p\cap
U)\cong {\tilde U}/(p\cap {\tilde U})$ is a Cohen-Macaulay module of
dimension $2$. If not, apply the same method by recurrence with
other variables $x_i$ arriving finally to $I_1$, where the induction
hypothesis work.
\end{proof}

\section{Stanley decompositions}

Let $M$ be a finitely generated multigraded $S$-module. Given a
$\ZZ^n$-homogeneous element $y\in M$ and $Z$ a set of variables
$\subset \{x_1,\ldots,x_n\}$ we set $yK[Z]=\{yf:f\in K[Z]\}$. The
$K$ linear space $yK[Z]$ is called a {\em Stanley space}, if $yK[Z]$
is a free  $K[Z]$-module. A presentation of $M$ as a direct sum of
Stanley spaces :
$${\mathcal D}:\ M= \oplus_{i=1}^r y_iK[Z_i]$$
is called a {\em Stanley decomposition} of $M$. Set
$\sdepth({\mathcal D})=\min_i |Z_i|$ and $$\sdepth(M)=\max_{\mathcal
D}\sdepth\ {\mathcal D},$$ where the maximum is taken over all
Stanley decompositions ${\mathcal D}$ of $M$. The {\em Stanley
depth} of $M$ is a nice invariant studied in \cite{R} and
\cite{HVZ}.

Let
$${\mathcal F}:\ \ 0=M_0\subset M_1\subset\ldots\subset M_r=M$$
be a {\em prime filtration} of $M$ as in the first section. After
\cite{HVZ} we define $\fdepth({\mathcal F})=\min\{\dim\ S/P:P\in
\Supp({\mathcal F})\}$ and the {\em filtration depth} of $M$ given
by $\fdepth(M)=\max_{\mathcal F}$\ $\fdepth({\mathcal F})$, where
the maximum is taken over all prime filtrations ${\mathcal F}$ of
$M$. We have
$$\fdepth(M)\leq \depth\ M, \sdepth\ M\leq \min\{\dim\ S/P:P\in
\Ass \ M\}$$ if $\dim_K\ M_a\leq 1$ for all $a\in \ZZ^n$ (see
\cite[Proposition 1.3]{HVZ}). If $M$ is clean then it has a clean
filtration ${\mathcal F}$, that is $\Supp({\mathcal F})=\Min\ M$,
and so it follows $\fdepth\ M=\depth \ M=\sdepth\ M$. Thus Lemma
\ref{clean} and Theorem \ref{clean3} give the following:

\begin{Proposition}\label{clean5}
Let $M$ be a multigraded $S$-module. Then $\fdepth\ M=\depth \
M=\sdepth\ M$ if one of the following conditions holds:
\begin{enumerate}
\item{} the associated prime ideals of $M$ have dimension $1$ and $\dim_K\ M_a\leq 1$ for all $a\in \ZZ^n$,
or
\item{} $M$ is a reduced Cohen-Macaulay module of dimension $2$ with
$M\cong U/I$ for some monomial ideals $I\subset U$ of $S$.
\end{enumerate}
\end{Proposition}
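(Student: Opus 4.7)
The plan is to derive both parts by combining cleanness of $M$ with the general inequalities recorded just above the statement (from \cite[Proposition 1.3]{HVZ}): for any multigraded $S$-module $M$ one has $\fdepth M\le \depth M$ and $\fdepth M\le \sdepth M$, and under $\dim_K M_a\le 1$ also $\sdepth M\le \min\{\dim S/P:P\in\Ass M\}$; moreover, any clean filtration $\mathcal F$ of $M$ has $\fdepth(\mathcal F)=\min\{\dim S/P:P\in\Min M\}$, supplying a matching lower bound on $\fdepth M$. I will use these to pinch the three invariants to a common value, which should be $1$ under~(1) and $2$ under~(2).

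For case~(2), I would apply Theorem~\ref{clean3} directly: since $M\cong U/I$ is reduced Cohen--Macaulay of dimension $2$, after passing to the presentation given by $I=\Ann M$ if necessary, one may assume $I$ is a radical monomial ideal, so Theorem~\ref{clean3} produces a clean filtration of $M$. All minimal primes of the Cohen--Macaulay module $M$ have dimension $2$, hence that filtration realizes $\fdepth M\ge 2$; combined with $\depth M=2$ and $\sdepth M\le 2$, the three invariants coincide and equal $2$.

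For case~(1), since $\dim S/P=1$ for every $P\in\Ass M$, the general bounds give $\depth M\le 1$ and $\sdepth M\le 1$, while $m\notin\Ass M$ yields (by multigraded prime avoidance) a nonzerodivisor on $M$ lying in $m$, so $\depth M\ge 1$. To match this with $\fdepth M\ge 1$ I would invoke Lemma~\ref{clean} to produce a clean filtration of $M$, from which $\fdepth M\ge 1$ and hence $\fdepth M=\depth M=\sdepth M=1$.

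The delicate point in case~(1) is verifying the hypothesis $P_i=\Ann(M/N_i)$ of Lemma~\ref{clean} for an irredundant primary decomposition $0=\cap_{i=1}^r N_i$ of $(0)$ in $M$; this does not hold in general (for instance $M=S/(x^2,y)$ in $K[x,y,z]$ has $\Ann M=(x^2,y)\ne (x,y)=P_1$). Exploiting $\dim_K M_a\le 1$, the fix is to refine each $P_i$-coprimary quotient $M/N_i$ through its $P_i$-adic filtration: each successive quotient is a finitely generated multigraded module over the one-dimensional polynomial ring $S/P_i$ with one-dimensional graded components, and therefore decomposes as a direct sum of shifted copies of $S/P_i$. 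Splicing these refinements into the inductive filtration built in the proof of Lemma~\ref{clean} delivers the required clean filtration of $M$, and this splicing step is where I expect the argument to require the most care.
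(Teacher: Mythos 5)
Your overall strategy coincides with the paper's: establish that $M$ is clean and then pinch with the inequalities $\fdepth M\le \depth M,\ \sdepth M\le \min\{\dim S/P: P\in \Ass M\}$; the paper gets cleanness from Lemma \ref{clean} (really from its general form \cite[Corollary 2.2]{Po_1}) in case (1) and from Theorem \ref{clean3} in case (2). In case (2) your reduction is stated incorrectly: ``passing to the presentation given by $I=\Ann M$'' changes the module (e.g.\ $(x)/(x^2)$ versus $(x)/(x)$); the legitimate move, used in the proof of Theorem \ref{main}, is to write $I=U\cap J$ with $J$ the intersection of the dimension-$2$ components and use $U/I\cong (U+J)/J$. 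That is a repairable slip.

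The genuine gap is your fix in case (1). You rightly observe that Lemma \ref{clean} needs $P_i=\Ann(M/N_i)$, but your repair --- refine each $P_i$-coprimary quotient by its plain $P_i$-adic filtration and claim each layer is a direct sum of shifted copies of $S/P_i$ because it is multigraded over the one-dimensional ring $S/P_i$ with one-dimensional components --- does not work. That implication is false ($K[y]/(y^2)$ and $K\oplus K[y]$ have one-dimensional components), and torsion genuinely occurs in those layers under your hypotheses: take $S=K[x,y,z]$, $P=(x,z)$, $M=J/I$ with $J=(xy,x^2)$, $I=(x^3,x^2z,xyz)$. Then $y$ is regular on $M$, $\Ass M=\{P\}$ with $\dim S/P=1$, and $\dim_K M_a\le 1$, yet $M/PM\cong K[y]\oplus K$, since $y\cdot\overline{x^2}=x\cdot\overline{xy}\in PM$ while $\overline{x^2}\notin PM$; refining the $P$-adic filtration of this $M$ forces a factor $S/\mm$, i.e.\ a non-clean prime filtration of filtration depth $0$, so your splicing would not deliver $\fdepth M\ge 1$. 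The correct device (this is \cite[Corollary 2.2]{Po_1}, and the paper repeats it in the proof of Theorem \ref{sdepth}) is the saturated filtration $M_j=M\cap P^jM_P$: each nonzero factor $M_j/M_{j+1}$ embeds into $P^jM_P/P^{j+1}M_P$, hence is torsion-free over $S/P$, and a finitely generated torsion-free module over the principal ideal domain $S/P\cong K[y]$ is free; splicing these free layers (in my example $M_1$ is spanned by the $\overline{x^2y^b}$ and both factors are free) yields the clean filtration, after which your computation $\fdepth M=\depth M=\sdepth M=1$ goes through.
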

The above proposition has bellow two extensions.
\begin{Theorem}\label{maincor} Let $U\supset I$ be some monomial ideals such that
$U/I$ is a Cohen-Macaulay $S$-module  and $I$ is a primary ideal.
Then $U/I$ is clean. In particular, $\fdepth\ M=\depth\ M=\sdepth\
M$.
\end{Theorem}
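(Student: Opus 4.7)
The plan is to prove cleanness by induction on the invariant $d(I)$; the equalities $\fdepth M = \depth M = \sdepth M$ follow at once from cleanness via the discussion preceding Proposition \ref{clean5}.

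Base case, $d(I) = 1$. Here $I$ is squarefree, and being primary it coincides with the associated prime $p$. Then $U/p$ sits inside $S/p$, which is a polynomial ring on the variables outside $p$. Being Cohen-Macaulay of dimension $s := \dim S/p$, the module $U/p$ is maximal Cohen-Macaulay over the regular ring $S/p$, hence free by Auslander--Buchsbaum. Any monomial $S/p$-basis produces a direct sum decomposition $U/p = \bigoplus_i (S/p)(-a_i)$, which is in particular a clean filtration.

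Inductive step, $d(I) > 1$. Form the partial polarization $I_1$ defined before Proposition \ref{depth}. Every new generator $u/x_i$ with $u \in G(I)$ and $\deg_{x_i} u = d(I) \geq 2$ still lies in $p$, so $I_1 \subseteq p$ remains $p$-primary with $d(I_1) = d(I) - 1$. Since $U/I$ is Cohen-Macaulay and $I$ is primary, Lemma \ref{hope} applied one variable at a time gives $U_1 = U + I_1$, and Corollary \ref{depth2} yields that $(U+I_1)/I_1 \cong U/(U \cap I_1)$ is Cohen-Macaulay of dimension $s$. The inductive hypothesis applied to this quotient (with $I_1$ still $p$-primary, of strictly smaller $d$) gives that $(U+I_1)/I_1$ is clean.

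From the short exact sequence
\[
0 \longrightarrow (U \cap I_1)/I \longrightarrow U/I \longrightarrow (U + I_1)/I_1 \longrightarrow 0,
\]
the depth lemma delivers $\depth\bigl((U\cap I_1)/I\bigr) \geq s$, so the kernel is Cohen-Macaulay of dimension $s$ (when nonzero). A clean filtration of $U/I$ is then obtained by concatenating a clean filtration of $(U \cap I_1)/I$ with the pullback of the clean filtration of $(U + I_1)/I_1$.

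The main obstacle is producing cleanness of the kernel $(U \cap I_1)/I$, since the denominator $I$ is unchanged and a direct induction on $d(I)$ does not apply. I would resolve this by a nested induction on the multiplicity of $U/I$ as a module over $S/p$: this multiplicity strictly drops from $U/I$ to $(U \cap I_1)/I$ whenever the quotient $(U+I_1)/I_1$ is nonzero, i.e.\ whenever $U \not\subseteq I_1$. The degenerate case $U \subseteq I_1$ is handled by iterating the polarization through $I_2, I_3, \ldots$ until the first stage where $U \not\subseteq I_k$, or until one reaches $I_k = p$; in the terminal subcase $U \subseteq p$, the module $U/I$ is annihilated by a power of $p$, and one peels off socle monomials $u \in U \setminus I$ with $pu \subseteq I$, each contributing a cyclic submodule $Su \cong (S/p)(-\deg u)$, verifying at each stage that the quotient remains Cohen-Macaulay before iterating.
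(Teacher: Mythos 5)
Your base case and the skeleton of the inductive step --- induction on $d(I)$, the partial polarization $I_1$ (which stays $p$-primary with smaller $d$), Corollary \ref{depth2} giving that $(U+I_1)/I_1$ is Cohen--Macaulay, and the exact sequence $0\to (U\cap I_1)/I\to U/I\to (U+I_1)/I_1\to 0$ --- coincide with the paper's proof. The divergence is exactly where you flag ``the main obstacle'': the paper disposes of the kernel $N=(U\cap I_1)/I$ in one line by claiming $pN=0$ (and, in the degenerate case $U\subseteq I_1$, that $pU\subseteq I$), so that $N$ (resp.\ $M$) is a maximal Cohen--Macaulay, hence free, hence clean module over the polynomial ring $S/p$. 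You should know that this is the intended mechanism; you should also know that, taken literally, it is delicate: for $I=(x^3,y^2)\subset K[x,y,z]$ and $U=S$ one gets $N=(x^2,y^2)/(x^3,y^2)\cong S/(x,y^2)$, which is not killed by $y\in p$ (though it is still clean). So your instinct that this step needs real work is sound.

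The trouble is that your replacement is not a proof. The secondary induction on multiplicity is well founded where the multiplicity actually drops, but it bottoms out precisely at $U\subseteq I_1$, and everything you say there is heuristic. Since $I_1\subseteq I_2\subseteq\cdots$, once $U\subseteq I_1$ you never reach a stage with $U\not\subseteq I_k$; you land immediately in the ``terminal subcase'' $U\subseteq p$, which carries almost no force ($U/I$ is annihilated by a power of $p$ for \emph{every} $p$-primary $I$, so that observation is vacuous, and the example above has $U=I_1\subseteq p$ while $M$ is not even an $S/p$-module). Peeling off socle elements requires proving (i) that $(0:_M p)$ is a free $S/p$-module, (ii) that $M/(0:_M p)$ is again Cohen--Macaulay of dimension $s$ --- which is exactly the kind of statement the theorem itself asserts and which you only promise to ``verify at each stage'' --- and (iii) that the iteration terminates with every factor isomorphic to $S/p$. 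Without these, Case B is the original statement restated and the induction has no base. To complete the argument you need either a correct verification that the relevant subquotients really are $S/p$-modules (so that maximal Cohen--Macaulayness forces freeness), or a genuinely different device for the case $U\subseteq I_1$.
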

\begin{proof}
Set $p=\sqrt{I}$ and apply induction on $d(I)$.  If $d(I)=1$ the
module $M=U/I$ is free (and so clean) over $S/p$. Suppose that
$d(I)>1$. If $U\subset I_1$ then $pU\subset I$ and so $M$ is again
free and clean over $S/p$. Otherwise, by Corollary \ref{depth2} we
see that $M_1=U/(U\cap I_1)$ is also  a Cohen-Macaulay module of
dimension $s=\dim \ M$. From the exact sequence
$$0\rightarrow N=(U\cap I_1)/(U\cap I)\rightarrow M\rightarrow M_1 \rightarrow 0$$
we get that $N$ is  a maximal Cohen-Macaulay module over $S/p$ of
dimension $s$, because $pN=0$. Thus  $N$ is free and clean as above.
Note that $M_1\cong (U+I_1)/I_1$ is clean by  the induction
hypothesis, because $d(I_1)<d(I)$. Hence the filtration $0\subset
N\subset M$ can be refined to a clean one.
\end{proof}

\begin{Theorem}\label{main}
Let $I\subset U$ be  some monomial ideals of $S$ such that $U/ I$ is
a Cohen-Macaulay $S$-module  of dimension $2$.  Then $U/I$ is clean.
In particular, $\fdepth(U/I)=\sdepth(U/I)=2.$
\end{Theorem}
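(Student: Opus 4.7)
The plan is to extend the proof of Theorem \ref{clean3} from the reduced case to the general case by substituting Theorem \ref{pclean} for Lemma \ref{clean2}, and by leaning on Theorem \ref{maincor} to dispose of pieces annihilated by a primary ideal. I proceed by induction on a complexity measure of the pair $(U,I)$, for instance the lexicographic pair $(|\Ass(S/I)|,\,d(I))$. The base case $|\Ass(S/I)|=1$ (so $I$ is primary) follows from Theorem \ref{maincor}, and the base case $d(I)=1$ (so $I$ is reduced) from Theorem \ref{clean3}.

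For the inductive step I first reduce to the hypotheses of Theorem \ref{pclean}, namely that $\Ass(S/I)$ consists only of dimension-$2$ primes and $\Ass(S/U)$ only of dimension-$1$ primes. The argument parallels the reduction in the proof of Lemma \ref{clean2}: any dimension-$2$ prime in $\Ass(S/U)$ must coincide with one in $\Ass(S/I)$ and can be absorbed, while any primary component of $I$ whose radical has dimension less than $2$ cannot appear in $\Ass(U/I)$ (as $U/I$ is Cohen-Macaulay of dimension $2$) and so may be dropped from the primary decomposition without affecting $U/I$. Theorem \ref{pclean} then yields a dimension-$2$ prime $p=p_j\in\Ass(S/I)$ such that $U/(p\cap U)$ is Cohen-Macaulay of dimension $2$; being annihilated by $p$, it is a maximal Cohen-Macaulay, hence free, hence clean, module over the polynomial ring $S/p$.

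From the short exact sequence $0\to(p\cap U)/I\to U/I\to U/(p\cap U)\to 0$ and the Depth Lemma, $(p\cap U)/I$ is either zero or Cohen-Macaulay of dimension $2$. In the latter case write $I=q_j\cap I'$ with $I'=\bigcap_{s\ne j}q_s$ and refine via
\[
0\to\bigl((p\cap U)\cap I'\bigr)/I\to(p\cap U)/I\to\bigl((p\cap U)+I'\bigr)/I'\to 0.
\]
The left-hand submodule is annihilated by $q_j$ (using $q_j I'\subset q_j\cap I'=I$) and by the second isomorphism theorem is isomorphic to $V/q_j$ for the monomial ideal $V=((p\cap U)\cap I')+q_j$, so Theorem \ref{maincor} provides its cleanness. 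The right-hand quotient embeds into $S/I'$, which has $|\Ass(S/I')|=|\Ass(S/I)|-1$, so the induction hypothesis gives cleanness once this quotient is seen to be Cohen-Macaulay of dimension $2$. Concatenating the three clean filtrations produces a clean filtration of $U/I$, yielding the ``In particular'' statement through Proposition \ref{clean5}.

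The main obstacle is verifying that each of the two pieces in the refined sequence is genuinely Cohen-Macaulay of dimension $2$: the Depth Lemma alone gives only depth at least $1$ for $\bigl((p\cap U)+I'\bigr)/I'$, and the missing unit of depth must be extracted from the specific structure of $p\cap U$ combined with the fact that $p$ was produced by Theorem \ref{pclean}. A secondary delicate point is making the preliminary reductions to the hypotheses of Theorem \ref{pclean} precise without destroying Cohen-Macaulayness of $U/I$; here the partial polarization tools of Section $2$, especially Corollary \ref{depth2}, are expected to play an auxiliary role.
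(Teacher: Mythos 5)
Your reduction to the hypotheses of Theorem \ref{pclean} -- ``absorbing'' the dimension-$2$ primes of $\Ass(S/U)$ as in Lemma \ref{clean2} -- is exactly the step that breaks down when $I$ is not reduced. In Lemma \ref{clean2} the reducedness of $I$ is what forces the dimension-$2$ primary component $Q_j$ of $U$ to equal the prime $p$ itself ($p\subset Q_j\subset P_j=p$), whence $U\cap I'=U'\cap p\cap I'=U'\cap I=I$ and $U/I\cong (U+I')/I'$ with one fewer component. For a general $I=\cap_s q_s$ one only gets $q_s\subset Q_j\subset P_j=p_s$; the component $Q_j$ need not equal $q_s$, the equality $U\cap I'=I$ can fail, and no such isomorphism is available. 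This is precisely why the paper does not route everything through Theorem \ref{pclean}: the situations where $\Ass(S/U)$ contains dimension-$2$ primes (non-embedded, respectively embedded in dimension-$1$ primes of $\Ass(S/U)$) are treated by two separate, lengthy direct arguments, each organized around proving $\depth S/(U+I')=1$ by means of an auxiliary exact sequence ($0\to U/I\to Q_1/q_1\oplus U'/I'\to (Q_1+U')/(q_1+I')\to 0$, resp.\ its variant with $U_1=(Q_1\cap Q_2)+q_1$) and a case analysis of when $q_1+q_j$ is $m$-primary; Theorem \ref{pclean} is invoked only in the remaining case where $\Ass(S/U)$ has no dimension-$2$ primes at all.

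Moreover, the point you yourself flag as ``the main obstacle'' -- that the Depth Lemma only gives depth $\geq 1$ for the quotient piece, and the missing unit of depth must come from somewhere -- is not a technical loose end but the heart of the proof. In the paper, once the appropriate prime is isolated, essentially all the work goes into the computation $\depth S/(U+I')=1$, which through $0\to U/(I'\cap U)\to U'/I'\to U'/(U+I')\to 0$ yields $\depth U/(I'\cap U)=2$; only then does the exact sequence $0\to (U\cap I')/I\to U/I\to U/(I'\cap U)\to 0$ make the kernel Cohen--Macaulay of dimension $2$ as well (it is killed by $q_1$, so Theorem \ref{maincor} or the induction on the number $e$ of primary components applies), and the two clean filtrations concatenate. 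Without supplying that extra unit of depth your induction does not close; incidentally, the lexicographic induction on $(|\Ass(S/I)|,d(I))$ is unnecessary in the paper's scheme, since Theorem \ref{pclean} already internalizes the induction on $d(I)$ and the main proof inducts only on $e$. So your proposal assembles the right tools (Theorems \ref{clean3}, \ref{maincor}, \ref{pclean}), but it has genuine gaps at the two places where the actual proof does its real work: handling dimension-$2$ primes in $\Ass(S/U)$ without reducedness, and establishing the Cohen--Macaulayness of the factors.
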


\begin{proof} There exists a monomial ideal $J$ such that $I=U\cap
J$ and $\Ass(S/J)$ contains only prime ideals of dimension $2$.
Changing $I$ by $J$ and $U$ by $U+J$ we may assume that $\Ass S/I$
contains  only prime ideals of dimension $2$. Let $U=\cap_{i=1}^t
Q_i$, $I=\cap_{j=1}^e q_j$ be some reduced primary decompositions of
$U$ respectively $I$ and $P_i=\sqrt{Q_i}$, $p_j=\sqrt{q_j}$. Apply
induction on $e$. If $e=1$ apply Theorem \ref{maincor}.

Now suppose that $e>1$, $\dim S/P_1=2$ and there exists no $i\in
[t]$ such that $P_1\subset P_i$. It follows $P_1\supset p_j$ for
some $j\in [e]$, let us say $P_1\supset p_1$. Note that $P_1=p_1$,
$Q_1\supset q_1$.   Set $U'= \cap_{i>1}^t Q_i$, $I'=\cap_{j>1}^e
q_j$, $T=S\setminus ((\cup_{i>1}^t P_i)\cup (\cup_{j>1}^e p_j)$. We
have $I'\subset U'$. Note that $T^{-1}(U/I)=T^{-1}(U'/I')$ is
Cohen-Macaulay of dimension $2$, and so $U'/I'$ is Cohen-Macaulay of
dimension $2$ too. We show that $\depth S/(U+I')=1$. From the exact
sequence
$$0\rightarrow U /I\rightarrow Q_1/q_1\oplus U'/I'\rightarrow (Q_1+U')/(q_1+I') \rightarrow 0$$
we get $(Q_1+U')/(q_1+I')$ of depth $\geq 1$. Thus if $q_1+q_j$ is
$m$-primary then either $q_1+q_j\supset Q_1+U'$, or there exists
${\mathcal J}\subset [e]$ such that $\dim S/(q_1+q_k)= 1$ for all
$k\in {\mathcal J}$ and $\cap_{k\in {\mathcal J}}(q_1+q_k)\subset
q_1+q_j$.
 We have $U+I'=(Q_1+I')\cap U'=(\cap_{j=2}^t (Q_1+q_j))\cap U'$. If
$q_1+q_j\supset Q_1+U'$ then we get $Q_1+q_j\supset U+I'$. In the
second case we have  $\cap_{k\in {\mathcal J}}(q_1+q_k)\subset
q_1+q_j\subset Q_1+q_j$ and so $\cap_{k\in {\mathcal
J}}(Q_1+q_k)\subset Q_1+q_j$ and $\dim S/(Q_1+q_k)\geq 1$ for all
$k\in {\mathcal J}$. Hence $U+I'$ is the intersection of primary
ideals of dimension $\geq 1$, that is $\depth S/(U+I')=1$.  From the
exact sequence
$$0\rightarrow U/(I'\cap U)\rightarrow U'/I'\rightarrow U'/(U+I')\rightarrow 0$$
we get $\depth U/(I'\cap U)=2$. In  the exact sequence
$$0\rightarrow  (U\cap I')/I\rightarrow M\rightarrow U/(I'\cap U) \rightarrow 0$$ the ends are
Cohen-Macaulay modules of dimension $2$ with smaller $e$, the right
one being in fact $(U+I')/I'$ and the left one being $((U\cap
I')+q_1)/q_1$. By induction hypothesis they are clean and so  the
filtration $0\subset (U\cap I')/I\subset M$ can be refined to a
clean one.

Next suppose that $e>1$ and the prime ideals of  $\Ass S/U$ of
dimension $2$ are embedded in the prime ideals of $\Ass S/U$ of
dimension $1$. Suppose that $P_1,P_2\in \Ass S/U$ satisfy
$P_1\subset P_2$, $\dim S/P_1=2$, $\dim S/P_2=1$. As above we may
suppose $P_1=p_1$ and  we show that  $\depth S/(U+I')\geq 1$. Set
$U_1= (Q_1\cap Q_2)+q_1$, $U_2= \cap_{i=3}^t Q_i$ (if $t=2$ take
$U_2=S$). From the exact sequence
$$0\rightarrow U /I\rightarrow U_1/q_1\oplus U_2/I'\rightarrow (U_1+U_2)/(q_1+I') \rightarrow 0$$
we get $(U_1+U_2)/(q_1+I')$ of depth $\geq 1$. Thus if $q_1+q_j$ is
$m$-primary then either $q_1+q_j\supset U_1+U_2$, or there exists
${\mathcal J}\subset [e]$ such that $\dim S/(q_1+q_k)= 1$ for all
$k\in {\mathcal J}$, and $\cap_{k\in {\mathcal J}}(q_1+q_k)\subset
q_1+q_j$. We have $U+I'=(Q_1+I')\cap U'=(\cap_{j=2}^t (Q_1+q_j))\cap
U'$. If $q_1+q_j\supset U_1+U_2$ then we get $Q_1+q_j\supset U+I'$.
In the second case, as above we get  $\cap_{k\in {\mathcal
J}}(Q_1+q_k)\subset Q_1+q_j$ and $\dim S/(Q_1+q_k)= 1$ for all $k\in
{\mathcal J}$.  Hence $\depth S/(U+I')=1$.

Let $L=S\setminus ((\cup_{i=3}^t P_i)\cup (\cup_{j=2}^e p_j))$. Note
that $L^{-1}(U/I)=L^{-1}(U_2/I')$ is Cohen-Macaulay and so $U_2/I'$
is Cohen-Macaulay too. From the exact sequence
$$0\rightarrow U/(I'\cap U)\rightarrow U_2/I'\rightarrow U_2/(U+I')\rightarrow 0$$
we get $\depth U/(I'\cap U)=2$. As above we get $M$ clean using
induction hypothesis on $e$.

Remains to study the case when there are not prime ideals in $\Ass
S/U$ of dimension $2$. Then applying Theorem \ref{pclean} there
exists
 $j\in [e]$ such that $U/(p_j\cap U)$ is a Cohen-Macaulay module of
 dimension $2$ which is clearly clean. From the exact sequence
$$0\rightarrow (U\cap p_j)/I\rightarrow M\rightarrow U/(U\cap p_j)\rightarrow 0$$ we see that
$(U\cap p_j)/ I$ is a Cohen-Macaulay module of dimension 2. Since a
prime ideal of dimension two $p_j$ is contained in $ \Ass S/(U\cap
p_j)$ we get $(U\cap p_j)/ I$ clean as above. Hence the filtration
$0\subset (U\cap p_j)/ I\subset M$ can be refined to a clean one.
\end{proof}

 Next corollary is similar to
\cite[Proposition 1.4]{HSY}.

\begin{Corollary} \label{clean6}
Let $I\subset S$ be a Cohen-Macaulay monomial ideal of dimension
$2$. Then $S/I$ is clean. In particular $\fdepth\ S/I= \sdepth\
S/I=2$ and $I$ is a Stanley ideal.
\end{Corollary}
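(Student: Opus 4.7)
The plan is to derive the corollary directly from Theorem \ref{main} by specializing to the case $U=S$. Since $I$ is a Cohen-Macaulay monomial ideal of dimension $2$, the quotient $S/I$ is a Cohen-Macaulay $S$-module of dimension $2$, so Theorem \ref{main} applied to the pair $I \subset U = S$ yields that $S/I$ is clean and $\fdepth(S/I) = \sdepth(S/I) = 2$.

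It then remains to extract the statement that $I$ is a Stanley ideal. Since $S/I$ is clean, it admits a clean prime filtration $\mathcal{F}$, i.e.\ one with $\Supp(\mathcal{F}) = \Min(S/I)$. Because $S/I$ is Cohen-Macaulay of dimension $2$, every $P \in \Min(S/I)$ satisfies $\dim S/P = 2$. Reading off a Stanley decomposition from this prime filtration in the standard way, each factor $S/P_i(-a_i)$ contributes a Stanley space $u_i K[Z_i]$ with $|Z_i| = \dim S/P_i = 2 = \depth S/I$. This produces a Stanley decomposition meeting the size requirement in the definition of a Stanley ideal, completing the proof.

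There is no serious obstacle here; the entire work has already been done in Theorem \ref{main}. The only point that requires a moment's care is the passage from cleanness to the existence of a Stanley decomposition with $|Z_i| \geq \depth S/I$, but this is routine and is in fact exactly the content of the inequality $\fdepth M \leq \sdepth M$ noted before Proposition \ref{clean5}, together with the equalities $\fdepth(S/I) = \depth(S/I) = \sdepth(S/I) = 2$ supplied by cleanness.
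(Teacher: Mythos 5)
Your proposal is correct and takes exactly the route the paper intends: Corollary \ref{clean6} is presented without a separate proof as an immediate specialization of Theorem \ref{main} to the pair $I\subset U=S$, and your passage from cleanness (all filtration factors $S/P_i(-a_i)$ with $\dim S/P_i=2$) to a Stanley decomposition with every $|Z_i|=2=\depth S/I$ is the standard reading-off argument the paper relies on. The only point worth a remark is that $U=S$ is the unit monomial ideal, a degenerate case of Theorem \ref{main} in which the input from Theorem \ref{pclean} becomes the trivial statement that $S/p$ is Cohen--Macaulay of dimension $2$ for $p\in\Ass S/I$, so nothing in the argument breaks.
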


The following  lemma can be found in \cite{Ra}.
\begin{Lemma}\label{asia}
Let $$0\rightarrow U\rightarrow M\xrightarrow{h} N\rightarrow 0$$ be
an exact sequence of multigraded modules and maps. Then $$\sdepth\
M\geq \min\{\sdepth \ U,\sdepth\ N\}, $$ $$ \fdepth\ M\geq
\min\{\fdepth \ U,\fdepth\ N\}.$$
\end{Lemma}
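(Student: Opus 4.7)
The plan is to lift decompositions and filtrations from $N$ to $M$ along $h$ and concatenate with those coming from $U$. For both statements the key observation is that $h$ is a surjective multigraded map, so preimages of multigraded elements can be chosen multigraded.

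For the Stanley-depth bound, I would start with Stanley decompositions $U = \bigoplus_i u_i K[W_i]$ and $N = \bigoplus_j y_j K[Z_j]$ realizing $\sdepth U$ and $\sdepth N$ respectively. For each $j$, pick a multigraded preimage $\tilde{y}_j$ with $h(\tilde{y}_j) = y_j$. The first thing to check is that each $\tilde{y}_j K[Z_j]$ remains a Stanley space: a relation $\tilde{y}_j f = 0$ would map under $h$ to $y_j f = 0$, forcing $f = 0$ by freeness of $y_j K[Z_j]$. Next I would verify $M = U \oplus \bigoplus_j \tilde{y}_j K[Z_j]$ as $K$-vector spaces. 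Surjectivity: for $m \in M$, write uniquely $h(m) = \sum_j y_j f_j$ and observe $m - \sum_j \tilde{y}_j f_j \in \Ker h = U$. Directness: a relation $u + \sum_j \tilde{y}_j f_j = 0$ with $u \in U$ maps to $\sum_j y_j f_j = 0$, so all $f_j = 0$ and then $u = 0$. Gluing with the decomposition of $U$ produces a Stanley decomposition of $M$ whose minimum subset size is $\min(\sdepth U, \sdepth N)$.

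For the filtration-depth bound, the strategy is identical. I would pick prime filtrations
$${\mathcal F}_U: 0 = U_0 \subset \cdots \subset U_s = U, \qquad {\mathcal F}_N: 0 = N_0 \subset \cdots \subset N_t = N$$
realizing $\fdepth U$ and $\fdepth N$, and define $M_j := h^{-1}(N_j)$. Then $M_0 = U$, $M_t = M$, and the isomorphism $M_j/M_{j-1} \cong N_j/N_{j-1}$ shows that
$$0 = U_0 \subset \cdots \subset U_s = M_0 \subset M_1 \subset \cdots \subset M_t = M$$
is a prime filtration of $M$ whose support equals $\Supp({\mathcal F}_U) \cup \Supp({\mathcal F}_N)$, hence of fdepth $\min(\fdepth U, \fdepth N)$.

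Neither step poses a serious obstacle; the only point requiring care is that every choice (lift of $y_j$, preimage submodule $M_j$) be made in the multigraded category, which is automatic since $h$ is multigraded and the submodules/elements involved are so.
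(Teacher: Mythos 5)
Your proposal is correct and is essentially the paper's own argument: the paper lifts the generators $y_i$ of a Stanley decomposition of $N$ to homogeneous elements $y_i'\in M$ and combines the resulting spaces $y_i'K[Z_i]$ with a Stanley decomposition of $U$, noting the filtration statement is proved similarly (which is exactly your preimage filtration $M_j=h^{-1}(N_j)$ concatenated with a filtration of $U$). Your write-up merely makes explicit the freeness, directness, and multigradedness checks that the paper leaves to the reader.
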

For the proof note that if $N=\oplus_i y_iK[Z_i]$,
$U=\oplus_jz_jK[T_j]$, $Z_i,T_j\subset \{x_1,\ldots,x_n\}$ are
Stanley decompositions of $N$, respectively $U$, and $y_i'\in M$ are
homogeneous elements with $h(y_i')=y_i$ then $$M=(\oplus_i
y_i'K[Z_i])\oplus(\oplus_jz_jK[T_j])$$ is a Stanley decomposition of
$M$ (the proof of the second inequality is similar).

\begin{Remark}{\em
We believe that the idea of the proof of Theorem \ref{main} could be
used to find nice Stanley decompositions. Some different attempts
were already done in \cite{An}, \cite{HVZ}. We illustrate our idea
on a simple example. Let $S=K[x,y,z,w]$, $I=(x^2,y)\cap
(x,z)\cap(z,w)=(x^2z,x^2w,yz,xyw)$, and $I_1=(x,y)\cap (x,z)\cap
(z,w)=(xz,xw,yz)$. We have $\depth (S/I)=1$ (see \cite[Example
2.7]{AP} for details) and $\depth(S/I_1)=2$ because the associated
simplicial complex of $S/I_1$ is shellable. A clean filtration of
$S/I_1$ is given for example by
$${\mathcal D_1}: \ S/I_1=K[x,y]\oplus w K[y,w]\oplus
zK[w,z]$$ (it corresponds to the partition
$$\Delta=[\emptyset,\{xy\}]\cup [w,\{yw\}]\cup[z,\{wz\}]).$$
The multigraded $S$-module $I_1/I$ is generated by $xz,xw$ and it
has the Stanley decomposition $${\mathcal D_2}: \
I_1/I=xzK[z,w]\oplus xw K[w].$$ We have $\sdepth({\mathcal D_1})=2$,
and $\sdepth({\mathcal D_2})=1$. As $\depth(S/I)=1$ we get from the
exact sequence
$$0\rightarrow I_1/I \rightarrow  S/I \rightarrow  S/I_1 \rightarrow
0$$ that $\depth(I_1/I)=1$.
 Thus $\sdepth ({\mathcal D_2})=\depth (I_1/I)$ and
${\mathcal D_1},{\mathcal D_2}$ induce a Stanley decomposition:
$${\mathcal D}: \ S/I=K[x,y]\oplus w K[y,w]\oplus
zK[w,z]\oplus xzK[z,w]\oplus xw K[w]$$ of $S/I$ with
$\sdepth({\mathcal D})=1=\depth(S/I)$. }
\end{Remark}

Next we will show that  the Stanley depth of some  multigraded
S-modules of dimension 2 is $\geq 1$. An elementary case (a special
case of \cite[Corollary 3.4]{HVZ}) is given by the following lemma:

\begin{Lemma}\label{el}
Let $I\subset K[x,y]$ be a non-zero monomial ideal. If $I$ is not
principal then $\sdepth\ I= \fdepth\ I= 1$.
\end{Lemma}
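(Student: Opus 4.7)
The plan is to sandwich both quantities between $1$ and $1$ by proving $\sdepth\ I \leq 1$ and $\fdepth\ I \geq 1$, and invoking the universal inequality $\fdepth\ I \leq \sdepth\ I$. This last inequality holds because any prime filtration refines to a Stanley decomposition: identifying each quotient $S/P(-a)$ with a Stanley space $uK[Z]$ (where $Z=\{x_i : x_i\notin P\}$ and $u$ is a homogeneous lift of the generator of $S/P(-a)$) yields a Stanley decomposition whose Stanley depth equals the filtration depth of the original filtration.

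For the upper bound, suppose $\sdepth\ I = 2$. Then $I$ admits a Stanley decomposition $I=\bigoplus_i y_iK[x,y]$, which exhibits $I$ as a free $K[x,y]$-module. But $K[x,y]$ is a domain, so every nonzero free ideal has rank one and is therefore principal, contradicting the hypothesis. Hence $\sdepth\ I \leq 1$.

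For the lower bound, I would construct an explicit Stanley decomposition. Write the minimal monomial generators as $u_i = x^{a_i}y^{b_i}$, $i=1,\ldots,r$, ordered so that $a_1>a_2>\cdots>a_r\geq 0$; minimality of the generating set then forces $b_1<b_2<\cdots<b_r$. I partition the exponent set $E(I)\subset\NN^2$ into the ``quadrant'' $R_1=\{(\alpha,\beta): \alpha\geq a_1,\ \beta\geq b_1\}$ and, for $i=2,\ldots,r$, the finite-width vertical strips $R_i=\{(\alpha,\beta): a_i\leq\alpha<a_{i-1},\ \beta\geq b_i\}$. Pairwise disjointness is built into the $\alpha$-intervals; that the $R_i$ together cover $E(I)$ follows from a short argument: given $(\alpha,\beta)\in E(I)$, let $i$ be the unique index with $a_i\leq\alpha<a_{i-1}$ (set $a_0=\infty$); any $j$ with $u_j\mid x^\alpha y^\beta$ must satisfy $\alpha\geq a_j$, forcing $j\geq i$, and hence $\beta\geq b_j\geq b_i$. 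Translating the partition into Stanley spaces gives
\[
I\ =\ u_1K[x,y]\ \oplus\ \bigoplus_{i=2}^{r}\bigoplus_{\alpha=a_i}^{a_{i-1}-1} x^{\alpha}y^{b_i}K[y],
\]
a decomposition whose associated prime filtration has support contained in $\{(0),(x)\}$, both of Krull dimension $\geq 1$. Hence $\fdepth\ I\geq 1$.

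Combining the three inequalities yields $1\leq \fdepth\ I\leq\sdepth\ I\leq 1$, so both invariants equal $1$. The only real technical point is verifying the staircase partition, which is an elementary geometric observation specific to two variables; everything else is structural.
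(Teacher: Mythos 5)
Your overall strategy is the paper's: bound $\sdepth I$ above by ruling out the value $2$ (your free-implies-principal argument is exactly what the paper imports from \cite[Lemma 2.9]{Ra}), and bound $\fdepth I$ below by $1$ by an explicit construction. Moreover, your staircase decomposition is, up to interchanging the roles of $x$ and $y$, precisely the decomposition the paper records in the remark following the lemma, and your verification of the partition of the exponent set is correct, so $\sdepth I\geq 1$ is fine; the comparison $\fdepth\leq\sdepth$ via lifting prime filtrations is also fine.

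The one step that does not stand as written is the lower bound $\fdepth I\geq 1$. A Stanley decomposition is only a decomposition of $K$-vector spaces; to speak of its ``associated prime filtration'' you must exhibit an ordering of the Stanley spaces whose partial sums are $S$-submodules, and not every Stanley decomposition arises this way. In the order in which you display yours it fails: if $a_2+1<a_1$, then $u_1K[x,y]+x^{a_2}y^{b_2}K[y]$ is not closed under multiplication by $x$, since $x^{a_2+1}y^{b_2}$ lies in a ray not yet added. The repair is easy: add the rays $x^{\alpha}y^{b_i}K[y]$ in decreasing order of $\alpha$, from $\alpha=a_1-1$ down to $\alpha=a_r$; then multiplication by $x$ sends each new ray into $u_1K[x,y]$ or into a previously added ray (the strip containing column $\alpha+1$ has index $i'\leq i$, hence $b_{i'}\leq b_i$), so every partial sum is a submodule and the factors are, up to multigraded shift, one copy of $S$ and copies of $S/(x)$, giving $\fdepth I\geq 1$. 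Alternatively, argue as the paper does: filter $I$ by adding the principal ideals $(x^{a_i}y^{b_i})$ one at a time, starting with the generator of smallest $x$-exponent; the factors are then $S$ and modules $K[x,y]/(y^c)$ of filtration depth $1$, and Lemma \ref{asia} applied recursively gives $\fdepth I\geq 1$. With this adjustment your proof is complete.
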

\begin{proof} Let $\{x^{a_i}y^{b_i}: i\in [s]\}$, $a_i,b_i\in \NN$
be the minimal monomial system of generators of $I$. We may suppose
that $a_1>a_2>\ldots>a_s$ and it follows $b_1<b_2<\ldots <b_s$.
Consider the filtration
$${\mathcal F}: \ \ 0=F_0\subset F_1\subset \ldots\subset F_s=I$$
given by $F_i=F_{i-1}+ (x^{a_{s+1-i}}y^{b_{s+1-i}})$ for $i\geq 1$.
We have $\fdepth\ F_1=2$ because $F_1 $ is free, but
$F_i/F_{i-1}\cong K[x,y]/(y^{b_{s+2-i}-b{s+1-i}})$ has $\fdepth\ =1$
for all $i>1$. Using by recurrence Lemma \ref{asia} we get
$$\sdepth\ I\geq \fdepth\ I\geq 1.$$
If $\sdepth\ I=2$ then $I$ is free by \cite[Lemma 2.9]{Ra} and so
principal, which is false.
\end{proof}

\begin{Remark}{\em The Stanley decomposition associated to a natural
refined filtration of ${\mathcal F}$ is given by
$${\mathcal D}: \ I=x^{a_s}y^{b_s}K[x,y] \oplus (\oplus_{i=1}^{s-1}
\oplus_{j=b_i}^{b_{i+1}-1}  x^{a_i}y^{j }K[x]).$$}
\end{Remark}

Next we extend the above lemma to multigraded modules.
\begin{Proposition} Let $M$ be a torsion-free multigraded
$K[x,y]$-module with $\dim_KM_a\leq 1$ for all $a\in {\ZZ}^2$. If
$M$ is not free then
$$\sdepth\ M= \fdepth\ M= 1.$$
\end{Proposition}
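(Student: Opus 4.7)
The plan is to show that, up to a multigraded shift, $M$ is isomorphic to a non-principal monomial ideal of $K[x,y]$, and then invoke Lemma \ref{el} together with the shift-invariance of $\sdepth$ and $\fdepth$.

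I would begin by choosing a minimal multigraded generating set $y_1,\ldots,y_s$ of $M$ with $\deg y_i=(a_i,b_i)\in\ZZ^2$. The hypotheses force these degrees to form an antichain: if $(a_i,b_i)\leq(a_j,b_j)$ coordinatewise with $i\ne j$, then $x^{a_j-a_i}y^{b_j-b_i}y_i$ is a nonzero element of the one-dimensional space $M_{(a_j,b_j)}$ (nonzero by torsion-freeness), hence a scalar multiple of $y_j$, contradicting minimality. After reordering we may assume $a_1>\cdots>a_s$ and $b_1<\cdots<b_s$. Since $M$ is not free we must have $s\geq 2$: the case $s=1$ would make $M=Sy_1$ cyclic and torsion-free, hence free.

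Next, for each consecutive pair $i<i+1$ both $y^{b_{i+1}-b_i}y_i$ and $x^{a_i-a_{i+1}}y_{i+1}$ lie in the nonzero one-dimensional space $M_{(a_i,b_{i+1})}$, so they differ by a nonzero scalar. Rescaling $y_2,\ldots,y_s$ inductively I arrange
$$y^{b_{i+1}-b_i}y_i=x^{a_i-a_{i+1}}y_{i+1} \quad \text{for } i=1,\ldots,s-1;$$
torsion-freeness then propagates this to $y^{b_j-b_i}y_i=x^{a_i-a_j}y_j$ for all $i<j$. After shifting the multigrading so that $a_i,b_i\geq 0$, the assignment $x^{a_i}y^{b_i}\mapsto y_i$ extends to a surjective multigraded $S$-homomorphism
$$\varphi\colon J:=(x^{a_1}y^{b_1},\ldots,x^{a_s}y^{b_s})\to M,$$
because the normalized relations above are precisely the consecutive Taylor syzygies that generate the first syzygy module of $J$ in two variables. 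Both $J$ and $M$ have Hilbert function equal to the indicator function of $\bigcup_i\bigl((a_i,b_i)+\NN^2\bigr)$, so $\varphi$ is an isomorphism.

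Finally, since $s\geq 2$ the ideal $J$ is not principal, so Lemma \ref{el} yields $\sdepth J=\fdepth J=1$, and the claim follows by shift-invariance. The main obstacle is the simultaneous inductive rescaling of the $y_j$ to trivialize all relation scalars at once, and the matching of Hilbert functions; the latter relies on $\dim_KM_a\leq 1$ together with torsion-freeness, which ensures that multiplication by any monomial sends a generator to a nonzero element in the correspondingly shifted degree.
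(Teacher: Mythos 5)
Your proof is correct, but it takes a genuinely different route from the paper's. You classify $M$ up to a multigraded shift as a non-principal monomial ideal $J\subset K[x,y]$: the antichain argument for the generator degrees, the inductive rescaling that normalizes the consecutive relations, the well-definedness of $\varphi$ (the kernel of $S^s\to J$ is indeed generated by the consecutive syzygies $y^{b_{i+1}-b_i}e_i-x^{a_i-a_{i+1}}e_{i+1}$ in two variables, and these map to zero in $M$), and the Hilbert-function comparison forcing injectivity are all sound, each step using exactly the torsion-freeness and the condition $\dim_K M_a\leq 1$ where you say it does. The paper instead works with $M$ directly: it applies the structure theorem to $M/xM$ over $K[y]$ (non-freeness of $M$ forces a torsion summand $K[y]/(y^{r_j})$ to appear), lifts that decomposition through a Nakayama argument to an explicit Stanley decomposition $M=(\oplus_i u_iK[x,y])\oplus(\oplus_{j,t} v_jy^tK[x])$ together with a prime filtration whose factors are $K[x,y]$ and $K[x,y]/(y^{r_j})$, and then rules out the value $2$ by \cite[Lemma 2.9]{Ra}. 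Your reduction is shorter and yields the stronger structural statement that every such $M$ is a shifted monomial ideal, so the Proposition becomes literally a corollary of Lemma \ref{el} (whose appeal to \cite[Lemma 2.9]{Ra} you thereby inherit); the paper's construction buys an explicit decomposition and filtration of $M$ itself, which is closer in spirit to how the result is deployed afterwards (e.g.\ in Theorem \ref{sdepth}, where the torsion-free modules arising are quotients $M\cap P^iM_P/M\cap P^{i+1}M_P$ rather than ideals given in advance). If you polish your version, state explicitly the two small facts you use silently: that $\sdepth$ and $\fdepth$ are invariant under multigraded shifts, and the standard description of the syzygies of a monomial ideal in two variables.
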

\begin{proof} We have $M/xM\cong K[y]^s\oplus(\oplus_{j=1}^e
K[y]/(y^{r_j}))$ for some non-negative integers $e,s,r_j$ with $e>0$
because $M$ is not free. Since $M$ is multigraded there exist some
$\ZZ^n$-homogeneous elements $(u_i)_{i\in [s]}$ and $(v_j)_{j\in
[e]}$ of $M$ such that
$$M/xM = (\oplus_{i=1}^s {\hat u}_i K[y])\oplus (\oplus_{j=1}^e
\oplus_{t=0}^{r_j-1} {\hat v}_jy^t K)$$ is a Stanley decomposition
of $M/xM$ (we denote by "${\hat z}$" the residue of $z$ modulo $x$).
Let $$N=\sum_{i=1}^s u_i K[x,y]+\sum_{j=1}^e \sum_{t=0}^{r_j-1} v_j
y^t K[x]$$ be a sub-$K[x]$-module of $M$.

We claim that the above sum is direct. Indeed, let $u_if=u_jg$ for
some $i\not =j$. We may suppose $\gcd(f,g)\cong 1$, because
otherwise we may simplify with $\gcd(f,g)$, $M$ being torsion-free.
Then ${\hat u}_if={\hat u}_j g=0$ in $M/xM$ because ${\hat u}_i
K[y]\cap{\hat u}_j K[y]=0$. Thus ${\hat f}={\hat g}=0$ and so
$x|\gcd(f,g)$, which is false. Similarly, we see that $u_iK[x,y]
\cap v_jy^tK[x]=0$ and $ v_jy^tK[x]\cap v_cy^dK[x]=0$ for some
$(j,t)\not =(c,d)$.

Now we see that $yN\subset N$. It is enough to show that
$y^{r_j}v_j\in N$ for all $j$. If $y^{r_j}v_j=0$ there exist nothing
to show. Otherwise, we have $y^{r_j}{\hat v}_j=0$ in $M/xM$ and so
$y^{r_j}v_j\in xM$. Let $p$ be the biggest integer such that
$y^{r_j}v_j\in x^pM$. Since $x^pM$ is multigraded we have
$y^{r_j}v_j=x^pz$ for some $\ZZ^n$-homogeneous element $z\in M$.
Then ${\hat z }={\hat u}_ih$, or ${\hat z }={\hat v}_jq$ for some
$h\in K[y]$, $q\in K$. The cases are similar, let us suppose that
the first holds. Then $y^{r_j}v_j-x^pu_ih\in x^{p+1}M$. If
$y^{r_j}v_j=x^pu_ih\in N$ then we are done. Otherwise, we get
$y^{r_j}v_j\in x^{p+1}M$, because $x^{p+1}M$ is multigraded with
$\dim_K(x^{p+1}M)_a\leq 1$ for all $a\in {\ZZ}^2$. Contradiction!

Thus $N$ is a $K[x,y]$-module and by Nakayama we get $M\subset N$
from $M\subset N+xM$. Hence
$$M=(\oplus_{i=1}^s u_i K[x,y])\oplus(\oplus_{j=1}^e \oplus_{t=0}^{r_j-1} v_j
y^t K[x])$$ is a Stanley decomposition of $M$ and so $\sdepth M\geq
1$.

Consider the filtration
$${\mathcal F}: \ \ 0=F_0\subset F_1\subset \ldots\subset F_{e+1}=M$$
given by $F_1=\sum_{i=1}^s Su_i$, and $F_i=F_{i-1}+Sv_{i-1}$ for
$i>1$. We have $\fdepth\ F_1=2$ because $F_1$ is free and
$F_i/F_{i-1}\cong K[x,y]/(y^{r_{i-1}})$ has $\fdepth\ =1$ for $i>1$.
Then $\fdepth\ M\geq 1$ by Lemma \ref{asia} as in the proof of Lemma
\ref{el}. Note that the above Stanley decomposition is the Stanley
decomposition associated to a natural refined filtration of
${\mathcal F}$. Hence
$$\sdepth\ M\geq\fdepth\ M\geq 1.$$
If $\sdepth\ M=2$ then we obtain $M$ free by \cite[Lemma 2.9]{Ra},
which is false.
\end{proof}

Using the above proposition we get the following theorem inspired by
\cite[Lemma 1.2]{AP1}.

\begin{Theorem}\label{sdepth} Let $M$ be a multigraded $S$-module such
that  $\dim_KM_a\leq 1$ for all $a\in {\ZZ}^n$ and  $\Ass M$ has
only prime ideals of dimension $2$. If $\depth M=1$ then $\sdepth\
M\geq\fdepth \ M= 1$.
\end{Theorem}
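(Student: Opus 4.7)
The key reductions are $\fdepth M \le \depth M = 1$ (given) and $\sdepth M \ge \fdepth M$, which holds because any prime filtration whose supporting primes all have dimension $\ge d$ yields a Stanley decomposition of Stanley depth $\ge d$. So it suffices to prove $\fdepth M \ge 1$. I plan to induct on $r = |\Ass M|$, after strengthening the hypothesis to $\depth M \ge 1$ so that the induction also covers the case where an inductive quotient happens to be Cohen--Macaulay of dimension~$2$.

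For the base case $r \le 1$, if $\Ass M = \emptyset$ then $M = 0$; otherwise $\Ass M = \{P\}$ and I relabel so that $P = (x_3, \ldots, x_n)$. Every $m \in M$ then has $\sqrt{\Ann_S(m)} = P$, so by Noetherianity $P^k M = 0$ for some $k$, and I consider the filtration $0 \subset (0 :_M P) \subset (0 :_M P^2) \subset \cdots \subset (0 :_M P^k) = M$. Each layer $L_i = (0 :_M P^i)/(0 :_M P^{i-1})$ is a multigraded $S/P = K[x_1, x_2]$-module with $\dim_K (L_i)_a \le 1$. I would show each $L_i$ is torsion-free over $K[x_1, x_2]$: if $\bar m \in L_i$ is nonzero with $f \bar m = 0$ for some $f \in K[x_1, x_2] \setminus (0)$, pick $\pi \in P^{i-1}$ with $\pi m \neq 0$; then $f(\pi m) = \pi f m = 0$ while $\pi m \neq 0$, so $f \in \Ann_S(\pi m) \subseteq \sqrt{\Ann_S(\pi m)} = P$, forcing $f \in P \cap K[x_1, x_2] = (0)$, a contradiction. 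The previous proposition then yields $\fdepth_{K[x_1, x_2]} L_i \ge 1$, which equals $\fdepth_S L_i$ because $P L_i = 0$; iterated use of Lemma~\ref{asia} gives $\fdepth_S M \ge 1$.

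For the inductive step $r \ge 2$, I pick $P \in \Ass M$, relabel so that $P = (x_3, \ldots, x_n)$, and set $N = \bigcup_k (0 :_M P^k)$. Every associated prime of $N$ contains $P$ and lies in $\Ass M$, which forces $\Ass N = \{P\}$, so the base case applied to $N$ gives $\fdepth N \ge 1$. The crucial claim is $\Ass(M/N) \subseteq \Ass M \setminus \{P\}$. Given $Q = \Ann_S(\bar m) \in \Ass(M/N)$ with $m \in M \setminus N$, one has $Q m \subseteq N$, so $P^l Q \subseteq \Ann_S(m)$ for some $l$. For each $P' \in \Ass(Sm) \subseteq \Ass M$, primality forces $P \subseteq P'$ or $Q \subseteq P'$; since $\dim S/P' = 2$, the first case gives $P' = P$, while the second forces $\dim Q \ge 2$, hence $\dim Q = 2$ (as $Q \in \Supp M$ and $\dim M = 2$) and $P' = Q$. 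Because $m \notin N$ one cannot have $\Ass(Sm) = \{P\}$, so $Q \in \Ass(Sm) \subseteq \Ass M$ with $Q \neq P$. Hence $\Ass(M/N)$ contains only dimension-$2$ primes distinct from $P$, excluding $\mm$ and all primes of dimension $\le 1$; in particular $\depth(M/N) \ge 1$ and $|\Ass(M/N)| < r$, so $M/N$ satisfies the inductive hypothesis and $\fdepth(M/N) \ge 1$. Applying Lemma~\ref{asia} to $0 \to N \to M \to M/N \to 0$ yields $\fdepth M \ge 1$.

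The principal obstacle is the associated-prime analysis of $M/N$: a priori the quotient can acquire embedded associated primes of smaller dimension, and excluding $\mm$ in particular is what keeps $\depth(M/N) \ge 1$ and sustains the induction. This is where the hypothesis that $\Ass M$ consists only of dimension-$2$ primes is used decisively, through the dimension comparison $\dim Q \le \dim M = 2$ that forces $Q$ back into $\Ass M$. A subsidiary subtlety is the torsion-freeness of the layers $L_i$ over $K[x_1, x_2]$, which rests on the $P$-coprimary structure inherited from $\Ass M = \{P\}$ in the base case (or from $\Ass N = \{P\}$ in the inductive step).
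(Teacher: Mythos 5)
Your proposal is correct and follows essentially the same route as the paper: induction on $|\Ass M|$, reduction to the coprimary case via a filtration whose layers are torsion-free over $S/P\cong K[x_1,x_2]$, then the preceding Proposition together with Lemma \ref{asia}. The only (harmless) variations are that you filter by $(0:_M P^i)$ rather than by $M\cap P_1^iM_{P_1}$ and split off the $P$-torsion submodule rather than a primary component of $(0)$ in $M$, and your explicit strengthening of the induction hypothesis to $\depth M\ge 1$ tidies up a point the paper leaves implicit when the pieces of the exact sequence happen to be Cohen--Macaulay of dimension $2$.
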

\begin{proof}
Let $0=\cap_{i=1}^{r}N_i$, $Q_i=\Ann(M/N_i)$ be the irredundant
primary decomposition of $(0)$ in $M$. Then $\Ass
M=\{P_1,\ldots,P_r\}$, where $P_i=\sqrt{Q_i}$. Apply induction on
$r$. If  $r=1$ and $P_1=Q_1$ then $M$ is torsion-free over $S/P_1$.
Note that $S/P_1$ is a polynomial algebra in two variables. By the
above proposition we get $\sdepth\ M=\fdepth \ M= 1$, because $M$ is
not free over $S/P_1$ since $\depth\ M=1$.

If  $r=1$ but $P_1\not =Q_1$ ($r=1$), then as in the proof of
\cite[Corollary 2.2]{Po_1}) we note that the non-zero factors of the
filtration
$$0=M_s\subset\ldots\subset M_1\subset M_0=M,$$
$M_i=M\cap P_1^i M_{P_1}$ are torsion-free over $S/P_1$ and as above
have $\fdepth \geq 1$. Using several times Lemma \ref{asia} we get
$\fdepth \ M\geq 1$. Since $\sdepth \  M\geq \fdepth M$ it follows
$\sdepth \  M= \fdepth M=1$ because if $\sdepth \  M=2$ then $M$ is
free over $S/Q_1$ by \cite[Lemma 2.9]{Ra} and so $\depth\ M=2$,
which is false.

Now, we suppose that $r>1$. From the exact sequence
$$0\rightarrow N_r \rightarrow M \rightarrow M/N_r\rightarrow 0$$
we have $\sdepth\ M\geq \fdepth\ M\geq \min\{\fdepth\ N_r, \fdepth\
M/N_r\}\geq 1$ by induction hypothesis. As $\fdepth\ M\leq \depth\
M=1$ we are done.
\end{proof}

\section{Case $n=5$}

Let $M$ be a finitely generated multigraded $S$-module. After
\cite{Sc} we consider the {\em dimension filtration}:
$$0\subset D_0(M)\subset D_1(M)\subset \ldots \subset D_t(M)=M$$
of $M$ ($t=\dim M$), which is defined by the property that $D_i(M) $
is the largest submodule of $M$ with $\dim  D_i(M)\leq i$ for $0\leq
i\leq t$ (set $D_{-1}(M)=0$). We have
$\Ass(D_i(M)/D_{i-1}(M))=\{P\in \Ass\ M:\dim S/P=i\}$. $M$ is {\em
sequentially Cohen-Macaulay} if all factors of the dimension
filtration are either 0 or Cohen-Macaulay.

A prime filtration
$${\mathcal F}:\ \ 0=M_0\subset M_1\subset\ldots\subset M_r=M,$$
$M_i/M_{i-1}\cong S/P_i(-a_i)$ is called {\em pretty clean} if for
all $1\leq i<j\leq r$, $P_i\subset P_j$ implies $P_i=P_j$, roughly
speaking "big primes come first". $M$ is called {\em pretty clean}
if it has a pretty clean filtration. After \cite{HP}, $M$ is pretty
clean if and only if all non-zero factors of the dimension
filtration are clean, in which case are Cohen-Macaulay. Thus a
pretty clean $S$-module is sequentially Cohen-Macaulay. Next
proposition extends a result from \cite{AP}.

\begin{Theorem}\label{seq}
 Let $I\subset S$ be a monomial ideal. If $n=5$ then
$S/I$ is pretty clean if and only if $S/I$ is sequentially
Cohen-Macaulay.
\end{Theorem}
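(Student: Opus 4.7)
The plan is to use the characterization (recalled in the paragraph preceding the theorem) from \cite{HP}: a multigraded module is pretty clean if and only if every non-zero factor of its dimension filtration is clean, and in that case the factors are automatically Cohen-Macaulay. This immediately gives ``pretty clean $\Rightarrow$ sequentially Cohen-Macaulay'' in every number of variables, so I only need to establish the converse for $n=5$.

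Assume $S/I$ is sequentially Cohen-Macaulay. Since the dimension filtration of a multigraded module is multigraded, I write $D_i(S/I) = U_i/I$ for monomial ideals $U_i \supset I$, and each non-zero factor $U_i/U_{i-1} \cong D_i(S/I)/D_{i-1}(S/I)$ is a Cohen-Macaulay multigraded $S$-module of pure dimension $i$. My strategy is to prove each such factor is clean; then concatenating their clean filtrations in the order $i=0,1,2,3,4$ produces a prime filtration of $S/I$ in which $\dim S/P$ is non-decreasing along the filtration, i.e., the biggest primes appear first, which is precisely a pretty clean filtration.

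For $n=5$ the factors have dimensions in $\{0,1,2,3,4\}$, which I dispatch as follows. For $i=0$ the factor is annihilated by $\mm$ and, being multigraded with $\dim_K$ at most $1$ homogeneous components, is a direct sum of copies of $S/\mm$, trivially clean. For $i=1$ the factor satisfies the hypotheses of Lemma \ref{clean} directly. For $i=2$ cleanness is exactly the content of Theorem \ref{main}, the main new input of the paper. For $i=3$ the factor has codimension $2$, and cleanness follows from \cite[Proposition 1.4]{HSY}, whose argument extends from the case of $S/I$ to the quotients $U/J$ arising in the dimension filtration. For $i=4$, a codimension-$1$ Cohen-Macaulay multigraded factor has all associated primes in $\{(x_1),\ldots,(x_5)\}$; the one-associated-prime case is handled by Theorem \ref{maincor}, and the general case reduces to it via the peeling step from the proof of Lemma \ref{clean}.

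The hard part is the case $i=2$, which required all the technical work of Sections~1 and~2 (partial polarization, Lemmas \ref{prime} and \ref{clean4}) to establish Theorem \ref{main}; the other four cases are either immediate or already known in the small-codimension literature. The whole proof then reduces to assembling the cleanness results for each slice of the dimension filtration and concatenating the resulting filtrations.
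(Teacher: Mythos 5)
Your overall architecture coincides with the paper's: use the characterization from \cite{HP} (pretty clean $\Leftrightarrow$ all non-zero dimension-filtration factors are clean) and then establish cleanness of each factor, with Theorem \ref{main} carrying the dimension-2 case. However, your treatment of the dimension-3 factor has a genuine gap. You invoke \cite[Proposition 1.4]{HSY}, which concerns \emph{cyclic} modules $S/J$ with $J$ a Cohen--Macaulay monomial ideal of codimension $2$, and simply assert that ``the argument extends from the case of $S/I$ to the quotients $U/J$ arising in the dimension filtration.'' That extension is exactly the kind of statement that cannot be waved through: the analogous passage from $S/I$ to $U/I$ in dimension $2$ is the main theorem of this paper and required all of Sections 1 and 2 (partial polarization, Lemmas \ref{prime} and \ref{clean4}). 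The paper avoids the issue entirely by observing that for $n=5$ the dimension-3 factor is in fact cyclic: the intersection $Q_4$ of the height-one primary components is a principal monomial ideal, say $Q_4=uS$, so $D_3(S/I)/D_2(S/I)\cong Q_4/(Q_3\cap Q_4)\cong S/(Q_3:u)$, and all associated primes of $(Q_3:u)$ have codimension $2$; then \cite[Proposition 1.4]{HSY} applies verbatim (this is spelled out in the proof of Theorem \ref{5}). You need this cyclicity observation (or an actual proof of the module-level extension) to close the argument.

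Two smaller points. For $i=0$ your justification is incorrect as stated: $D_0(S/I)$ is a finite-length module, but it need not be annihilated by $\mm$ (take $I=\mm^2$), so it is not in general a direct sum of copies of $S/\mm$; the correct, still easy, argument is that any finite-length multigraded module has a composition series with all factors of the form $S/\mm(-a)$, and since $\Min=\{\mm\}$ this series is clean. For $i=4$ your reduction via Theorem \ref{maincor} and the peeling step of Lemma \ref{clean} does work, but it is heavier than needed: the top factor is $S/Q_4$ with $Q_4$ principal, so all associated primes are principal and cleanness is immediate, as the paper notes.
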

\begin{proof}
We have to show only the sufficiency. Suppose that $S/I$ is
sequentially Cohen-Macaulay. Then the factors
$D_i(S/I)/D_{i-1}(S/I)$ are Cohen-Macaulay for all $0\leq i \leq
t=\dim\ S/I$. They are also clean for $0\leq i\leq 1$ by
\cite[Corollary 2.2]{Po_1} (see also Lemma  \ref{clean}). For $i=2$
it is clean by our Theorem \ref{main} applied to $U,U\cap I'$, where
 $D_2(S/I)=U/I$, $I'$ being the intersection of the primary ideals
of dimension $2$ from an irredundant monomial primary decomposition
of $I$ (see \cite{Sc}). As $\Ass(D_3(S/I)/D_2(S/I))$ contains just
prime ideals of codimension 2 we get the cyclic $S$-module
$D_3(S/I)/D_2(S/I)$ clean by \cite[Proposition 1.4]{HSY} (see the
proof of Theorem \ref{5} for more details). If $i>3$ then the
associated prime ideals of the corresponding factors are principal
and the proof is trivial. Hence $S/I$ is pretty clean because all
the non-zero factors of the dimension filtration are clean.
\end{proof}

Next lemma is a variant of \cite[Corollary 1.3]{AP1}.
\begin{Lemma}\label{lem}
Let $I\subset S$ be a monomial ideal having all the associated prime
ideals of height $2$. If $n=5$ then $I$ is a Stanley ideal and
$$2\leq \sdepth(S/I)=\fdepth(S/I)=\depth(S/I)\leq 3.$$
\end{Lemma}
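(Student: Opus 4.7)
The plan is to prove this by induction on the number of primary components of $I$, reducing to previously established results. The upper bounds are immediate: $\depth(S/I) \leq \dim(S/I) = 3$ since all associated primes have dimension $3$, and $\sdepth(S/I) \leq \min\{\dim S/P : P \in \Ass M\} = 3$ by \cite[Proposition 1.3]{HVZ}. Since no associated prime is $\mathfrak{m}$, we have $\depth(S/I) \geq 1$ to start; the stronger bound $\depth \geq 2$ will drop out of the structural argument.

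Write $I = \cap_{j=1}^t q_j$ with $p_j = \sqrt{q_j}$ of height $2$. For the base case $t=1$, the ideal $I$ is $p$-primary with $p = (x_a, x_b)$, so the three variables outside $p$ form a regular sequence on $S/I$ and $S/I$ is Cohen-Macaulay of dimension $3$. Theorem \ref{maincor} then gives $S/I$ clean, so $\fdepth = \sdepth = \depth = 3$ and $I$ is Stanley.

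For the inductive step I would distinguish two cases according to whether $\cup_j p_j$ exhausts all five variables. In Case A, some variable $x_k$ lies in none of the $p_j$. Then $x_k$ is a non-zero-divisor on $S/I$ and $I$ extends an ideal $I' \subset S' = K[x_1,\dots,\hat{x_k},\dots,x_5]$ in four variables with all associated primes still of height $2$. Applying \cite[Corollary 1.3]{AP1} (the four-variable analogue), $I'$ is a Stanley ideal of $S'$ with $\fdepth_{S'}(S'/I') = \sdepth_{S'}(S'/I') = \depth_{S'}(S'/I') \in \{1,2\}$; tensoring with $K[x_k]$ raises each invariant by $1$, giving the required equalities and $\depth(S/I) \in \{2,3\}$.

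In Case B we have $\cup_j p_j = \mathfrak{m}$. Here I would mirror the proof of Theorem \ref{main}. Since $n = 5$ and each $p_j$ covers only two variables, any prime $p_j$ shares a variable with most of the other $p_k$, while primes disjoint from $p_j$ leave exactly one free variable outside $p_j \cup p_k$. Choose a convenient associated prime $p_1$ (for instance, one guaranteed by Theorem \ref{pclean} to split off a Cohen-Macaulay piece), set $J = \cap_{j\neq 1} q_j$, and use the exact sequence
$$0 \to S/I \to S/q_1 \oplus S/J \to S/(q_1 + J) \to 0.$$
The module $S/q_1$ is Cohen-Macaulay of dimension $3$ and clean (Theorem \ref{maincor}); $S/J$ has fewer primary components and is clean by induction (its depth being at least $2$). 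A careful analysis, relying on the combinatorial fact that $p_j + p_k$ has height at most $4$ in five variables, shows $\depth S/(q_1+J) \geq 1$, so the Depth Lemma yields $\depth(S/I) \geq 2$. Assembling a clean filtration proceeds as in the proof of Theorem \ref{main}, and Lemma \ref{asia} assembles the Stanley/filtration decompositions to give $\fdepth(S/I) = \sdepth(S/I) = \depth(S/I)$.

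The main obstacle I anticipate is Case B, specifically ensuring that the choice of $p_1$ and the resulting ideal $q_1 + J$ have a controlled primary decomposition so that the filtration is really clean and the depth bound is tight. The $n=5$ hypothesis enters here essentially: with only five variables, the height-$2$ primes satisfy strong intersection constraints (any three of them already cover at most six variables with repetition), which is exactly what makes the inductive analysis close.
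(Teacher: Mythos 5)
Your upper bounds, the base case $t=1$ via Theorem \ref{maincor}, and Case A (a free variable $x_k$, reduction to $n=4$ and \cite[Corollary 1.3]{AP1}) are fine, and indeed the paper itself offers no proof of this lemma beyond pointing to \cite[Corollary 1.3]{AP1}, so the real test is whether your Case B closes. It does not, and the central error is the word ``clean''. When $\cup_j p_j=\mathfrak m$ the module $S/I$ can have $\depth=2<3=\dim S/I$ (take $I=(x_1,x_2)\cap(x_2,x_3)\cap(x_3,x_4)\cap(x_4,x_5)\cap(x_5,x_1)$: its Alexander dual is the edge ideal of the pentagon, which has no linear resolution, so $S/I$ is not Cohen--Macaulay, yet it is connected in codimension one so $\depth=2$). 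For such $I$ a clean filtration is impossible, since cleanness would force $\fdepth=\min\{\dim S/P\}=3\le\depth$. For the same reason ``$S/J$ \dots is clean by induction (its depth being at least $2$)'' is wrong: the inductive hypothesis gives $\sdepth=\fdepth=\depth(S/J)\in\{2,3\}$, and $S/J$ is clean only in the Cohen--Macaulay case. So ``assembling a clean filtration as in Theorem \ref{main}'' cannot be the route here. (Also, Theorem \ref{pclean} concerns modules whose associated primes have dimension $2$, not $3$, so it does not supply your ``convenient $p_1$''.)

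There are two further concrete gaps. First, the inequality $\depth S/(q_1+J)\ge 1$ is the analogue of the hard computations in Lemmas \ref{prime} and \ref{clean4}; $q_1+J$ is \emph{not} $\cap_{j\ne 1}(q_1+q_j)$, so the pigeonhole observation about $p_1+p_j$ having height $\le 4$ does not by itself rule out an embedded maximal prime, and you give no argument. Second, even granting $\depth S/I\ge 2$, your Mayer--Vietoris sequence has $S/I$ as the \emph{sub}module, and Lemma \ref{asia} bounds the sdepth/fdepth of the \emph{middle} term from below; it gives no lower bound for $\sdepth(S/I)$ or $\fdepth(S/I)$. The usable sequence is $0\to J/I\to S/I\to S/J\to 0$ with $J/I\cong (J+q_1)/q_1$, and the missing content of the lemma is precisely a lower bound $\fdepth,\sdepth\ \ge 2$ for this non-cyclic multigraded module over the $3$-dimensional ring $S/p_1$ when it fails to be free — the dimension-$3$ analogue of Theorem \ref{sdepth}, which is proved in the paper only for dimension $2$. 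Until you supply that ingredient (or an argument reducing it to three variables), Case B remains open.
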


\begin{Theorem} \label{5} Let $I\subset S$ be a  monomial ideal. If
$n=5$ then $I$ is a Stanley ideal and $\sdepth\ S/I\geq
\fdepth(S/I)=\depth(S/I)$.
\end{Theorem}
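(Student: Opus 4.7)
The theorem reduces to $\fdepth(S/I) \geq \depth(S/I) =: d$: the inequality $\sdepth \geq \fdepth$ is formal (a prime filtration yields a Stanley decomposition), $\fdepth \leq \depth$ holds by \cite[Proposition 1.3]{HVZ}, and a prime filtration witnessing $\fdepth = d$ directly gives a Stanley decomposition with $|Z_j| \geq d$, so $I$ is Stanley. I would analyze the dimension filtration
\[
0 = D_{d-1}(S/I) \subset D_d(S/I) \subset \cdots \subset D_t(S/I) = S/I,
\]
with $t = \dim(S/I) \leq 4$ (the case $I=0$ is trivial); the leftmost zero holds because $\dim(S/P) \geq d$ for every $P \in \Ass(S/I)$. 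Each factor $F_i := D_i/D_{i-1}$ is of the form $J_{i+1}/J_i$ for monomial ideals $J_i \subset J_{i+1}$ (namely $J_i = \bigcap_{\dim(S/p_k) \geq i} q_k$), has $\Ass(F_i)$ equal to the set of primes of $\Ass(S/I)$ of dimension exactly $i$, and satisfies $\dim_K(F_i)_a \leq 1$ for all $a$. By iterating Lemma \ref{asia}, it is enough to show $\fdepth(F_i) \geq d$ for each $d \leq i \leq t$.

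Each dimension is handled by a result from the previous sections. For $i \leq 1$, one has $F_i \neq 0 \Rightarrow d \leq i$, and Lemma \ref{clean} (in its general form for monomial quotients, cf.\ \cite[Corollary 2.2]{Po_1}) makes $F_i$ clean with $\fdepth(F_i) = i \geq d$. For $i = 2$: when $d = 2$ the Depth Lemma on $0 \to D_2 \to S/I \to S/J_3 \to 0$, together with $\depth(S/J_3) \geq 1$ (since $m \notin \Ass(S/J_3)$), forces $\depth(D_2) = 2$, so $F_2 = D_2$ is Cohen--Macaulay of dimension $2$ and clean by Theorem \ref{main}; when $d \leq 1$, Theorem \ref{sdepth} gives $\fdepth(F_2) \geq 1 \geq d$. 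For $i = 3$, $F_3 = J_4/J_3$ has only height-$2$ associated primes; setting $I^{(2)} := \bigcap_{\mathrm{ht}(p_k)=2} q_k$, the identity $J_3 = J_4 \cap I^{(2)}$ yields $F_3 \cong (J_4 + I^{(2)})/I^{(2)} \hookrightarrow S/I^{(2)}$, to which Lemma \ref{lem} applies. For $i = 4$, $J_4$ is a principal monomial ideal (intersection of primaries with principal prime radicals), so $F_4 \cong S/J_4$ is clean with $\depth = 4 \geq d$.

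The main obstacle is $i = 3$: Lemma \ref{lem} is stated for quotients $S/I'$, while $F_3$ is merely a submodule of $S/I^{(2)}$. I expect this to be handled either by rerunning the argument of Lemma \ref{lem} directly on $F_3$ (note that $F_3$ is itself Cohen--Macaulay of dimension $3$ when $d = 3$, forced by the Depth Lemma on $0 \to F_3 \to S/J_3 \to S/J_4 \to 0$ with $\depth(S/J_4) = 4$), or by transferring a prime filtration of $S/I^{(2)}$ whose primes have dimension $\geq d$ to its submodule $F_3$. A secondary check is that $\depth(F_i) \geq d$ throughout (needed for consistency with $\fdepth(F_i) \leq \depth(F_i)$), which follows from iterated Depth-Lemma estimates on the dimension filtration starting from $\depth(S/I) = d$ and the explicit $\depth(F_4) = 4$.
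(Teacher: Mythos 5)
Your overall strategy is the same as the paper's: run the dimension filtration, bound $\fdepth$ of each factor $F_i=D_i/D_{i-1}$ by the results of the earlier sections (Lemma \ref{clean}/\cite[Corollary 2.2]{Po_1} for $i\leq 1$, Theorem \ref{main} and Theorem \ref{sdepth} for $i=2$, Lemma \ref{lem} for $i=3$, principality for $i=4$), and glue with Lemma \ref{asia}. However, there is a genuine gap exactly at the point you flag: the dimension-$3$ factor. You observe correctly that Lemma \ref{lem} is stated for cyclic modules $S/I'$ while $F_3=J_4/J_3$ is a priori only a submodule of $S/I^{(2)}$, but neither of your proposed fixes works as stated. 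A prime filtration of $S/I^{(2)}$ does not restrict or transfer to the submodule $F_3$ (fdepth is not monotone under passing to submodules), and ``rerunning the argument of Lemma \ref{lem}'' is not available either, since that lemma (a variant of \cite[Corollary 1.3]{AP1}, resting on the codimension-$2$ results of \cite{HSY}) is formulated for ideals, i.e. cyclic quotients. Your fallback observation that $F_3$ is Cohen--Macaulay of dimension $3$ when $d=3$ does not close the gap: the paper's own Remark and Example show that Cohen--Macaulay modules of dimension $3$ need not be clean, and no result in the paper gives $\fdepth=\depth$ for such modules directly.

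The missing idea, which is the one nontrivial reduction in the paper's proof, is that $J_4$ (your $J_4$, the paper's $Q_4$, the intersection of the height-one primary components) is a \emph{principal} monomial ideal, say $J_4=uS$, because each monomial primary ideal with principal prime radical is generated by a power of a variable. Hence $F_3=J_4/J_3=uS/(J_3\cap uS)\cong S/(J_3:u)$ (up to a multidegree shift), a cyclic module all of whose associated primes have codimension $2$, and Lemma \ref{lem} applies verbatim to the ideal $(J_3:u)$. Without this reduction to a cyclic module, the $i=3$ case of your argument is unproven, so the proposal as written does not constitute a complete proof.
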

\begin{proof}
We follow the proof given for $n=4$  in \cite[Proposition 1.4]{AP1}.
Let
$$0\subset D_0(S/I)\subset D_1(S/I)\subset \ldots \subset
D_4(S/I)=S/I$$ be the dimension filtration of $S/I$. Set
$E_i=D_i(S/I)/D_{i-1}(S/I)$ for $0\leq i\leq 4$. Let $I=\cap_{i=0}^4
\cap_{j=1}^{s_i} Q_{ij}$, $\dim Q_{ij}=i$ be an irredundant monomial
primary decomposition of $I$. Set $Q_i=\cap_{j=1}^{s_i} Q_{ij}$.
After \cite{Sc} we have $D_k(S/I)=(\cap_{k<i\leq 4} Q_i)/I$, $-1\leq
k\leq 3$. Clearly $Q_4/I=D_3(S/I)\subset S/I$ is clean because
$\Ass(S/Q_4)$ contains only prime ideals of height one, which are
principal. It follows $\fdepth((S/I)/D_3(S/I))=4$. Moreover $Q_4$ is
principal, let us say $Q_4=uS$ for some monomial $u$. Then $E_3\cong
S/(Q_3:u)$. Note that $\Ass S/(Q_3:u)$ contains only prime ideals of
codimension 2 and so  by Lemma \ref{lem} $(Q_3:u)$ is a Stanley
ideal and $\fdepth \ ((S/I)/D_2(S/I))= \depth ((S/I)/D_2(S/I))$. If
$D_2(S/I)=0$ we are done.

Suppose $s_2\not =0$. Then the associated prime ideals of $E_2$ have
dimension 2. If $E_2 $ is Cohen-Macaulay then $\fdepth\ E_2=\sdepth\
E_2=2$ by Theorem \ref{main} applied to $U=\cap_{2<i\leq
4}Q_i\supset U\cap Q_2$. If $\depth E_2=1$ (so $\depth S/I\leq 1$)
then by Theorem \ref{sdepth} we have $\fdepth\ E_2= 1$. Thus
$\fdepth \ ((S/I)/D_1(S/I))= \depth S/I$. If $D_1(S/I)=0$ we are
done.

Suppose $s_1\not =0$. Then $E_1$ is clean by \cite[Corollary
2.2]{Po_1}). If $\depth S/I=1$ then $  \depth E_1=1$ and we get
$\fdepth\ E_1=\sdepth\ E_1=1$. Thus $\fdepth(S/I)/D_0(S/I)=\fdepth\
S/Q_1=1$. If $\depth S/I=0$ then there is nothing to be shown.
\end{proof}

The inequality from the above theorem could be strict, as shows the
following:
\begin{Example}{\em Let $n=5$ and $I=(x_1,x_2)\cap
(x_3,x_4,x_5)\subset S$. Then $\sdepth\ S/I=2>\depth\ S/I=\fdepth\
S/I=1$.}
\end{Example}

 \vspace{1 pt}

\end{document}